\title{\textbf{Counting arcs on hyperbolic surfaces}}
\author{Nick Bell}
\date{}
\renewcommand{\subsectionmark}[1]{}
\newtheorem{theorem}{Theorem}
\newtheorem{prop}[theorem]{Proposition}
\newtheorem{lemma}[theorem]{Lemma}
\newtheorem{corollary}[theorem]{Corollary}
\theoremstyle{definition}
\newtheorem*{remark}{Remark}
\newtheorem*{ack}{Acknowledgements}
\newtheorem{example}{Example}
\newcommand{\PMod}{\textup{PMod}}
\newcommand{\Mod}{\textup{Mod}}
\newcommand{\ML}{\mathcal{ML}}
\newcommand{\ee}{{\rm e}}
\newcommand{\de}{\partial}
\newcommand{\mthu}{\mathfrak{m}_{\textup{Thu}}}
\newcommand{\cc}{\mathfrak{c}}
\newcommand{\bb}{\mathfrak{b}}
\newcommand{\ia}{\iota_\alpha}
\renewenvironment{abstract}
               {\list{}{\rightmargin\leftmargin}%
                \item[\hspace{8.5mm}\textbf{Abstract}]\relax}
               {\endlist}
\begin{document}
\maketitle

\begin{abstract}
    We give the asymptotic growth of the number of (multi-)arcs of bounded length between boundary components on complete finite-area hyperbolic surfaces with boundary. Specifically, if $S$ has genus $g$, $n$ boundary components and $p$ punctures, then the number of orthogeodesic arcs in each pure mapping class group orbit of length at most $L$ is asymptotic to $L^{6g-6+2(n+p)}$ times a constant. We prove an analogous result for arcs between cusps, where we define the length of such an arc to be the length of the sub-arc obtained by removing certain cuspidal regions from the surface.
\end{abstract}

\section{Introduction}

Let $S$ be an orientable surface of negative Euler characteristic of genus $g$ with $n$ boundary components and $p$ punctures, and we assume $(g,n+p)\neq(0,3)$. Let $\Mod(S)$ be the mapping class group
\[\Mod(S)\coloneqq\Mod(S^o)=\text{Homeo}^+(S^o)/\text{Homeo}^+_0(S^o),\]
where $S^o=S\setminus\de S$ is the interior of $S$, $\text{Homeo}^+(S^o)$ is the space of orientation-preserving homeomorphisms of $S^o$ and $\text{Homeo}^+_0(S^o)$ is the subgroup of homeomorphisms properly homotopic to the identity. Let $\PMod(S)$ be the pure mapping class group: the finite-index subgroup of $\Mod(S)$ consisting of exactly those elements which fix each boundary component and each puncture of $S$. See \cite{Primer} for a thorough treatment of mapping class groups. Here we will say that two multicurves, by which we mean formal sums of finitely many weighted curves, are \emph{of the same type} if they share a $\PMod(S)$-orbit.

A celebrated theorem of Mirzakhani (\cite{Mirz08},\cite{Mirz16}) gives the asymptotic growth of the number of (homotopy classes of)  multicurves of the same type of bounded hyperbolic length. Letting $Y$ be a complete hyperbolic metric on $S^o$ and $\gamma_0$ be a multi-curve on $S$, Mirzakhani showed that \begin{equation}
    \lim_{L\to\infty}\frac{|\{\gamma \ \textup{of type} \ \gamma_0\mid\ell_Y(\gamma)\leq L\}|}{L^{6g-6+2(n+p)}}=\frac{\cc(\gamma_0)}{\bb_{g,n+p}}\mthu(\{\ell_Y(\cdot)\leq1\})
    \label{eq:th mirz}
\end{equation}
where $\mthu$ is the Thurston measure on the space $\ML(S)$ of compactly supported measured laminations on $S^o$, $\cc(\gamma_0)$ is a constant depending on the type $\gamma_0$, and $\bb_{g,n+p}$ is a constant depending on the topology of $S$. We refer the reader to \cite{Mirz08} and \cite{Mirz16} for details of the constants, and to \cite{PenHar}, \cite{Thurstonnotes} and \cite{Hatcher88} for a background on $\ML(S)$. Here, $\ell_Y(\gamma)$ denotes the $Y$-length of the geodesic representative of $\gamma$. Mirzakhani first proved the above result for simple multicurves in \cite{Mirz08}, and then again for general multicurves in \cite{Mirz16}; also see \cite{ES19-RA} and \cite{ES20-BOOK} for an alternative proof of this theorem.

In fact, Mirzakhani's theorem holds if we redefine the type of a multicurve to correspond to the orbit of \emph{any} finite-index subgroup of $\Mod(S)$. 

In this paper, we shall show that Mirzakhani's theorem holds when we replace multicurves with multi-arcs. By $\de S$ we shall mean the boundary of $S$, consisting of the $n$ boundary components. The $p$ punctures correspond to ends of $S$. We will count both compact arcs, whose endpoints lie on $\de S$, and infinite arcs, whose endpoints are punctures of $S$. A multi-arc is a formal sum of finitely many weighted arcs, and as with multicurves, two multi-arcs are of the same type if they share a $\PMod(S)$-orbit. Detailed definitions can be found in Section \ref{sec:background}.

We endow $S$ with a (metrically) complete, finite-area, hyperbolic metric $X$ such that $\de S$ is geodesic. We note that whilst our metric is metrically complete, it is not geodesically complete (unless $\de S=\emptyset$), and that the punctures correspond to cusps under such a metric.

The length of a compact arc is the length of its orthogeodesic representative, that is, the unique geodesic compact arc in its homotopy class which meets $\de S$ orthogonally. The study of orthogeodesics on hyperbolic surfaces has a rich history. For example, if one counts all orthogeodesics of length at most $L$, Basmajian's Identity \cite{Bas93} gives an upper bound exponential in $L$ for this number, and the actual asymptotic growth was shown to be exponential by Parkonnen and Paulin in \cite{PP} (see \cite{He18} for a generalisation). These results can be viewed as analogues to Huber's \cite{Huber} and Margulis' \cite{Margulis} Prime Geodesic theorem, showing asymptotic exponential growth of the number of closed geodesics on the surface. Our results are instead analogues of Mirzakhani's theorem, counting arcs in each (pure) mapping class group orbit and giving polynomial asymptotic growth. We prove the following.

\begin{theorem}
    Let $X$ be a complete, finite-area, hyperbolic metric on $S$ such that $\de S\neq\emptyset$ is geodesic. Let $\alpha_0$ be a compact multi-arc on $S$. Then we have
    \[\lim_{L\to\infty}\frac{|\{\alpha \ \textup{of type} \ \alpha_0\mid\ell_X(\alpha)\leq L\}|}{L^{6g-6+2(n+p)}}=\frac{\cc(\alpha_0)}{\bb_{g,n+p}}\mthu(\{\ell_X(\cdot)\leq1\})\]
    where $\mthu$ and $\bb_{g,n+p}$ are as in \eqref{eq:th mirz}, and $\cc(\alpha_0)>0$ is a constant depending only on the type $\alpha_0$.
    \label{th:main compact}
\end{theorem}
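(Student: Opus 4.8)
The plan is to deduce the theorem from two essentially independent inputs, in the spirit of Mirzakhani's original argument and its axiomatisation by Erlandsson--Souto (\cite{ES19-RA}, \cite{ES20-BOOK}): an equidistribution statement for pure mapping class group orbits in $\ML(S)$, and the fact that the orthogeodesic length extends to a well-behaved function on $\ML(S)$. Writing $d=6g-6+2(n+p)$, I would introduce the sequence of counting measures on $\ML(S)$
\[
\nu_L=\frac{1}{L^d}\sum_{\alpha\in\PMod(S)\cdot\alpha_0}\delta_{\alpha/L}.
\]
Since the arcs of type $\alpha_0$ are by definition exactly the orbit $\PMod(S)\cdot\alpha_0$, viewed inside $\ML(S)$ as weight-one arcs, and since the extended $\ell_X$ will be homogeneous, the scaling relation $\ell_X(\alpha/L)=\ell_X(\alpha)/L$ yields
\[
\frac{|\{\alpha \ \textup{of type} \ \alpha_0\mid\ell_X(\alpha)\leq L\}|}{L^d}=\nu_L(\{\ell_X(\cdot)\leq1\}).
\]
Everything then reduces to identifying the weak-$*$ limit of $\nu_L$ and evaluating it on the unit ball of $\ell_X$.

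First I would establish the equidistribution $\nu_L\to\frac{\cc(\alpha_0)}{\bb_{g,n+p}}\mthu$ in the weak-$*$ topology, for a constant $\cc(\alpha_0)>0$ depending only on the type. This is the topological heart of the matter and the place where multi-arcs must be reconciled with Mirzakhani's theorem. The key structural point is that $\ML(S)$ already contains arcs as a sub-locus, so the orbit of an arc is a genuine subset of the same space in which the curve counting takes place; combined with the remark in the introduction that Mirzakhani's equidistribution holds for the orbit of \emph{any} finite-index subgroup, this should let me transfer the curve statement to $\PMod(S)$-orbits of arcs. Although every atom $\alpha/L$ lies on the lower-dimensional arc locus, these points proliferate and become dense as $L\to\infty$, so the limit is a multiple of the full measure $\mthu$; positivity of $\cc(\alpha_0)$ comes from density of the projectivised orbit in $\mathbb{P}\ML(S)$.

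Next I would prove the geometric input: the orthogeodesic length $\ell_X$, a priori defined only on compact multi-arcs, extends to a continuous, positively homogeneous function on $\ML(S)$ that is positive off the origin. Homogeneity on weighted arcs is immediate, so the real content is continuity, and the delicate point is the orthogonality condition along $\de S$ --- the orthogeodesic representative has no direct analogue in the closed-surface theory, and one must control how it behaves as arcs degenerate into more general laminations. This is the step I expect to be the main obstacle. One clean route is to double $S$ across $\de S$ to obtain a cusped surface $DS$: an arc meeting $\de S$ orthogonally doubles to a \emph{smooth} closed geodesic of exactly twice the length, so $\ell_X$ is realised as the pullback of half the ordinary hyperbolic length function on $\ML(DS)$, which is manifestly continuous and homogeneous. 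The work would then be to check that this doubling correspondence between $\ML(S)$ and the involution-invariant laminations on $DS$ is continuous.

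Finally I would combine the two inputs. Positivity and homogeneity of $\ell_X$ force the unit ball $\{\ell_X(\cdot)\leq1\}$ to be compact, hence of finite Thurston measure, and its boundary $\{\ell_X(\cdot)=1\}$ to be $\mthu$-null, being the boundary of a star-shaped region for a homogeneous function. Weak-$*$ convergence of $\nu_L$ therefore upgrades to convergence of masses on this continuity set, giving
\[
\lim_{L\to\infty}\nu_L(\{\ell_X(\cdot)\leq1\})=\frac{\cc(\alpha_0)}{\bb_{g,n+p}}\mthu(\{\ell_X(\cdot)\leq1\}),
\]
which is exactly the claimed asymptotic.
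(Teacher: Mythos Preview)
Your outline has the right architecture for a Mirzakhani-style argument, but the equidistribution step is a genuine gap, and it is precisely the step that carries all the content of the theorem. You assert that $\PMod(S)$-orbits of arcs equidistribute to a multiple of $\mthu$, but neither of the justifications you offer does any work. First, in the paper's setup $\ML(S)$ is the space of compactly supported measured laminations on the interior $S^o$; arcs with endpoints on $\de S$ are not elements of this space, so the atoms $\delta_{\alpha/L}$ do not even live where you want them to. Second, the finite-index remark in the introduction concerns orbits of \emph{curves} under various finite-index subgroups of $\Mod(S)$; it says nothing about changing the type of object from curves to arcs. Your fallback heuristic---``the atoms proliferate and become dense, so the limit is a multiple of $\mthu$''---is circular: density alone never identifies a limiting measure, and knowing the correct normalisation is exactly the counting statement you are trying to prove. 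The doubling idea you raise for the length function does not rescue the equidistribution step either, because the image of $\PMod(S)$ in $\Mod(DS)$ is of infinite index, so Mirzakhani's theorem on $DS$ does not apply to its orbits.

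The paper avoids all of this by never leaving the world of curves on $S$. To each compact arc $\alpha$ it associates a genuine curve $\gamma_\alpha$ on $S$---essentially the third cuff of the immersed pair of pants determined by $\alpha$ and the two boundary components it meets---and proves two concrete facts: a uniform length comparison $|\ell_X(\gamma_\alpha)-2\ell_X(\alpha)|\leq C(X)$ coming from the hyperbolic trigonometry of a pair of pants, and that the assignment $\alpha\mapsto\gamma_\alpha$ is $\PMod(S)$-equivariant and exactly $k$-to-$1$ on each orbit. With these in hand, the arc count is sandwiched between two shifted curve counts, and Corollary~\ref{corol:count with X} (Mirzakhani's theorem in the present setting) finishes the job immediately, yielding $\cc(\alpha_0)=k\cdot 2^{6g-6+2(n+p)}\cc(\gamma_{\alpha_0})$. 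No new equidistribution statement, no extension of $\ell_X$ to $\ML(S)$, and no doubling are needed.
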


As implied by the name, infinite arcs have infinite length as they descend infinitely far down the cusps. Hence we must define a suitable notion of the length of infinite arcs to allow us to derive an analogue of Theorem \ref{th:main compact}. A natural way to do this is to cut off the cusps at some point and consider the length of the segment of the arc which remains. Given any positive $t\leq1$, we define the \emph{$t$-length} of an infinite arc $\alpha$ to be $\ell_X^t(\alpha)=\ell_X(\alpha^t)$, where $\alpha^t$ is the compact sub-arc of $\alpha$ obtained by removing a cuspidal region of area $t$ at each cusp (we refer to Section \ref{sec: infinite arcs} for the precise definition). We prove the following result.

\begin{theorem}
    Let $X$ be a complete, finite-area, hyperbolic metric on $S$ with (possibly empty) geodesic boundary. Let $\alpha_0$ be an infinite arc on $S$. Then for any positive $t\leq1$, we have
    \[\lim_{L\to\infty}\frac{|\{\alpha \ \textup{of type} \ \alpha_0\mid\ell_X^t(\alpha)\leq L\}|}{L^{6g-6+2(n+p)}}=\frac{\cc(\alpha_0)}{\bb_{g,n+p}}\mthu(\{\ell_X(\cdot)\leq1\})\]
    where $\mthu$ and $\bb_{g,n+p}$ are as in \eqref{eq:th mirz}, and $\cc(\alpha_0)>0$ is a constant depending on the type $\alpha_0$. In particular, the limit does not depend on $t$.
    \label{th:main infinite}
\end{theorem}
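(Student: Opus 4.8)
The plan is to deduce Theorem \ref{th:main infinite} from the same convergence-of-measures machinery underlying Theorem \ref{th:main compact}, isolating the two genuinely new features: the choice of truncation parameter $t$, and the behaviour of the length function near the cusps. Since Theorem \ref{th:main infinite} also allows $\de S=\emptyset$, where there are no compact arcs to count, I would not attempt a direct reduction to Theorem \ref{th:main compact} but rather reuse its engine. Concretely, I would organise the argument around the scaled counting measures
\[\mu_L \coloneqq \frac{1}{L^d}\sum_{\alpha\ \textup{of type}\ \alpha_0}\delta_{\alpha/L},\qquad d\coloneqq 6g-6+2(n+p),\]
viewed on the appropriate space of measured laminations/currents associated to $S$ in which the orthogeodesic representatives live. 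The target is to show $\mu_L$ converges weakly to $\tfrac{\cc(\alpha_0)}{\bb_{g,n+p}}\,\mthu$, supported on $\ML(S)$, and then to evaluate both sides on the sublevel set $\{\ell_X^t(\cdot)\le 1\}$, using that its topological boundary is $\mthu$-null so that the weak limit passes to the counting function.

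First I would prove the claimed $t$-independence directly, since it both simplifies the argument and explains why $\ell_X$ rather than $\ell_X^t$ appears on the right-hand side. The key observation is that, for any $0<t',t\le 1$, the two truncations $\alpha^{t}$ and $\alpha^{t'}$ of a given infinite arc differ only inside the finitely many cuspidal regions, where the extra length is that of a segment running between two horocycles of prescribed areas. A short computation in the cusp shows this extra length is at most a constant $C=C(\alpha_0,t,t')$ depending only on the number of ends of $\alpha_0$ — hence only on the type — and not on which representative of the $\PMod(S)$-orbit we take, since mapping classes preserve the weights and combinatorial type. Consequently
\[\{\alpha:\ell_X^{t'}(\alpha)\le L-C\}\subseteq\{\alpha:\ell_X^{t}(\alpha)\le L\}\subseteq\{\alpha:\ell_X^{t'}(\alpha)\le L+C\},\]
and dividing by $L^d$ and letting $L\to\infty$ shows the limit is independent of $t$. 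This reduces the theorem to any single convenient value of $t$.

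Next I would verify that $\ell_X^t$ extends to a continuous, $1$-homogeneous function whose unit sublevel set is compact with $\mthu$-null boundary, so that the evaluation criterion (in the style of Erlandsson--Souto) applies. Homogeneity is built into the weighted definition, and continuity away from the cusps is inherited from the continuity of hyperbolic length on measured laminations; the only real content is controlling the cuspidal contribution, which is exactly the bounded-difference estimate above. The crucial geometric point is that, when an infinite arc is rescaled by $1/L$ and $L\to\infty$, the leaves spiralling into the cusps carry vanishing mass, so the limiting current is a genuine compactly supported measured lamination in $\ML(S)$. This is precisely why the limiting measure is a multiple of the Thurston measure and why the factor $\mthu(\{\ell_X(\cdot)\le1\})$ from \eqref{eq:th mirz} reappears unchanged. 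The identification of $\cc(\alpha_0)>0$ would then follow from an analogue of Mirzakhani's integration formula applied to the orbit of $\alpha_0$, with positivity coming from the fact that this orbit's pushforward has full support on a top-dimensional piece of $\ML(S)$.

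I expect the main obstacle to be precisely the analysis at the cusps: showing that the truncation, which is not scale-invariant, does not perturb the leading-order count, and that the scaled infinite arcs converge to closed laminations rather than to laminations with leaves escaping out the cusps. Making the bounded-difference estimate \emph{uniform over the whole orbit} (and hence uniform in $L$) is what lets the non-homogeneous truncation be absorbed into a negligible shift of $L$, and is the technical heart both of the $t$-independence and of the reduction to the established Thurston-measure asymptotics.
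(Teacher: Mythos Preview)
Your route is genuinely different from the paper's, and the central step is not yet justified. The paper does \emph{not} build scaled orbit measures out of infinite arcs or invoke an Erlandsson--Souto convergence theorem on such objects. Instead it associates to each infinite arc $\alpha$ a closed curve $\gamma_\alpha=J(\alpha)$ via an immersed generalised pair of pants (one boundary, two cusps), proves a length comparison $|\ell_X(\gamma_\alpha)-2\ell_X^t(\alpha)|\le C(\iota(\alpha,\alpha),t)$ (Lemma~\ref{lem:bound infinite}), checks that $J$ is $\PMod(S)$-equivariant and $k$-to-$1$ on the orbit of $\alpha_0$ (Proposition~\ref{prop:k to 1 infinite}), and then sandwiches the arc count between two curve counts exactly as in Theorem~\ref{th:main compact}, finishing with Corollary~\ref{corol:count with X}. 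The constant emerges as $\cc(\alpha_0)=k\cdot 2^{6g-6+2(n+p)}\cc(\gamma_{\alpha_0})$, and $t$-independence is a by-product of the fact that $C$ is uniform over the orbit (since $\iota(\alpha,\alpha)$ is constant on it) and drops out in the limit.

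The gap in your plan is the sentence ``viewed on the appropriate space of measured laminations/currents associated to $S$ in which the orthogeodesic representatives live.'' Infinite arcs---bi-infinite geodesics asymptotic to cusps---are neither compactly supported measured laminations nor geodesic currents on $S^o$ in any of the standard frameworks, so there is no ambient space in which $\delta_{\alpha/L}$ and $\mthu$ simultaneously make sense and in which the existing convergence theorems apply. Your remark that ``the leaves spiralling into the cusps carry vanishing mass'' identifies the phenomenon you would need, but does not supply the space, the topology, or the limit theorem; absent that, the assertion $\mu_L\to c\,\mthu$ has no content, and the identification of $\cc(\alpha_0)$ ``via an analogue of Mirzakhani's integration formula'' is likewise unsupported. (A secondary point: your bounded-difference argument for $t$-independence glosses over the case where $\alpha^t$ has several components because of returning segments in $\mathscr{H}_t$; the bound still holds since it may depend on $\iota(\alpha_0,\alpha_0)$, but the geometric picture you describe---just two segments between horocycles---is not the whole story.) The paper's arc-to-curve reduction sidesteps all of this by never leaving the world of closed curves, where Mirzakhani's theorem is already available.
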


\begin{remark}
    Theorem \ref{th:main infinite} also holds for infinite multi-arcs, following the same argument presented in this paper. Moreover, our arguments can be easily modified to apply to half-infinite arcs, by which we mean arcs with one endpoint on the boundary and one at a puncture, and multi-arcs whose components are any combination of infinite, compact and half-infinite arcs.
\end{remark}

There are other natural choices of length to assign to infinite arcs, such as the truncated length (see \cite{Parlier20}) and the closely related $\lambda$-length (see \cite{Penner87} and \cite{Pennernotes}). Theorem \ref{th:main infinite} also holds for the truncated length, as we will explain in Section \ref{sec: infinite arcs}.

Although it might be possible to prove Theorems \ref{th:main compact} and \ref{th:main infinite} using methods similar to the proof of Mirzakhani's theorem, we instead take a simpler approach here. The main idea is to associate a multicurve $\gamma_\alpha$ to each multi-arc $\alpha$ in a way which respects length, up to a well-behaved error, and then use Mirzakhani's theorem to deduce Theorems \ref{th:main compact} and \ref{th:main infinite}. In fact, $\cc(\alpha_0)$ will be shown in each case to be closely related to $\cc(\gamma_{\alpha_0})$: we will get that $\cc(\alpha_0)=k(\alpha_0)2^{6g-6+2(n+p)}\cc(\gamma_{\alpha_0})$ where $k(\alpha_0)$ is a combinatorial constant depending on $\alpha_0$, $\cc(\gamma_{\alpha_0})$ is as in \eqref{eq:th mirz}, and $\gamma_{\alpha_0}$ is the curve associated to $\alpha_0$ as defined in Sections \ref{sec:arcs and curves} and \ref{sec: infinite arcs}.

In Section \ref{sec:background}, we will introduce the necessary tools to formulate our proof, and deal with a technicality regarding the application of Mirzakhani's theorem in our setting.  We shall discuss the link between compact arcs and curves in Section \ref{sec:arcs and curves}, before proving Theorem \ref{th:main compact} in Section \ref{sec: compact arcs}. Then in Section \ref{sec: infinite arcs}, we will demonstrate how to apply the same method to infinite arcs and subsequently prove Theorem \ref{th:main infinite}.

\begin{ack}
    The author would like to thank Viveka Erlandsson for suggesting the topic, and for advising them throughout. Additionally, the author would like to thank Hugo Parlier and Juan Souto for their invaluable help and feedback, and Lars Louder for providing Example \ref{ex: lars example}. Finally, the author would like to thank their family and friends for their constant support.
\end{ack}

\section{Background}

\label{sec:background}

As above, let $\de S$ denote the boundary of $S$, and let $S^o=S\setminus\de S$ denote the interior of $S$. Let $\mathfrak{C}$ denote the collection of punctures on $S$. When convenient, we may consider the punctures as marked points on (the closure of) $S$. Throughout the following, let $X$ denote a complete, finite-area, hyperbolic metric on $S$ such that each component of $\de S$ is geodesic.
    
By a \emph{curve} we mean (the homotopy class of) an immersion of the circle $\gamma\colon\mathds{S}^1\to S$, and we identify curves which differ by an orientation. We assume curves to be essential, meaning not homotopic to a point or a puncture, and non-peripheral, meaning not homotopic to a boundary component. By abuse of notation, we will use $\gamma$ to refer to both a curve and its homotopy class. If a curve can be realised by an embedding, we call it simple.
    
A \emph{compact arc} is an immersion of the closed interval $\alpha\colon[0,1]\to S$ such that $\alpha(0),\alpha(1)\in\de S$ and $\alpha((0,1))\subset S^o$. We consider compact arcs up to homotopy relative to $\de S$, where we allow the endpoints to move along $\de S$, and we assume that they are not homotopic into the boundary. Similarly, we define an \emph{infinite arc} to be an immersion of the open interval $\alpha\colon(0,1)\to S$ such that the endpoints are in $\mathfrak{C}$, by which we mean that when we consider the punctures as marked points, the limit of $\alpha$ in each direction is a marked point. We consider infinite arcs up to homotopy relative to $\mathfrak{C}$, and we assume that they are not homotopic into $\mathfrak{C}$. We identify arcs which differ by an orientation, and again by abuse of notation, we refer to both an arc and its homotopy class by $\alpha$. If an (infinite or compact) arc can be realised as an embedding, then we call it simple. We stress that throughout, we allow arcs to have self-intersections; we do not only count simple arcs.
    
A \emph{multicurve} or a \emph{multi-arc} is a finite formal sum of weighted curves or (infinite or compact) arcs respectively. Explicitly, if $\omega$ is a multicurve (resp. multi-arc), then
\[\omega=\sum_{i=1}^m a^i\omega^i\]
for some $a^i\in\mathds{R}_+$ and $m\in\mathds{Z}_+$, where each $\omega^i$ is a curve (resp. arc). We will refer to the $\omega^i$ as the components of $\omega$.

Each homotopy class of curves has a unique geodesic representative, and each homotopy class of compact arcs has a unique geodesic representative which meets the boundary orthogonally, which we refer to as an \emph{orthogeodesic}. We define the length of (a homotopy class of) a curve or compact arc to be the length of its geodesic or orthogeodesic representative, which we denote by $\ell_X(\cdot)$. The length of a multicurve or compact multi-arc is defined to be the weighted sum of the lengths of its components: for $\omega=\sum_{i=1}^m a^i\omega^i$, we have $\ell_X(\omega)=\sum_{i=1}^m a^i\ell_X(\omega^i)$. We will discuss how to assign appropriate finite lengths to infinite arcs in Section \ref{sec: infinite arcs}.

The pure mapping class group $\PMod(S)$ acts naturally on curves and arcs in $S$. If $\varphi$ is a mapping class and $\omega$ is either a geodesic curve, an orthogeodesic compact arc or a geodesic infinite arc, then we define $\varphi\cdot\omega$ to be the (ortho-)geodesic representative of $f(\omega)$, where $f$ is any representative of $\varphi$. Let $\omega_0$ be a curve or arc, then for any curve or arc $\omega$, we say that $\omega$ is \emph{of type} $\omega_0$ if they share an orbit in the pure mapping class group, that is, there exists some $\varphi\in\PMod(S)$ such that $\varphi\cdot\omega_0=\omega$.

The action of $\PMod(S)$ on multicurves and multi-arcs is defined analogously to the above: if $\omega=\sum_{i=1}^m a^i\omega^i$ is a multicurve or multi-arc, then
\[\varphi\cdot\omega=\sum_{i=1}^m a^i(\varphi\cdot\omega^i).\]
We say that a multicurve or multi-arc $\omega$ is of type $\omega_0$ if $\omega$ and $\omega_0$ share a $\PMod(S)$-orbit. As a result, we have that if $\omega=\sum_{i=1}^m a^i\omega^i$ and $\omega_0=\sum_{j=1}^n a^j_0\omega^j_0$ are of the same type then $m=n$ and, up to relabelling, for all $i\in\{1,\dots,m\}$, $a^i=a^i_0$ and $\omega^i$ is of type $\omega^i_0$.

Since $X$ is complete and finite-area, each puncture corresponds to a \emph{cusp} under this metric. Recall that a cusp is an end which has a neighbourhood $H_t$ isometric to
\[\Big\{z\in\mathds{H}^2\Big\vert \textup{Im}(z)>\frac{1}{t}\Big\}/\langle z\mapsto z+1\rangle\]
for some $t>0$, where we have identified the hyperbolic plane $\mathds{H}^2$ with the Poincar\'e upper half-plane. Such a region has volume $t$, and we refer to $H_t$ as a \emph{cuspidal region (of volume $t$)}. The ends of any infinite arc escape down cusps, and the unique geodesic representative of its homotopy class eventually intersects the horocyclic foliation of the corresponding cusps orthogonally.

\newpage

Let $t>0$. For each $p\in\mathfrak{C}$, let $H_t^p$ denote the cuspidal region at $p$ of area $t$. Denote the union of these regions over all $p$ by $\mathscr{H}_t=\cup_{p\in\mathfrak{C}}H_t^p$. It is well-known that for any $t<2$, the cuspidal regions $H_t^p$ are embedded and pairwise disjoint, as can be seen as a result of the Collar Lemma (see for example Theorem 4.4.6 of \cite{Buser}).

For each boundary curve $\delta$ in $\de S$, and for any $c>0$, define the annulus $A_c^\delta$ to be the set of points at a distance less than $c$ from $\delta$. That is,
\[A_c^\delta=\{x\in S\mid d_X(x,\delta)<c\}.\]
Denote by $\mathscr{A}_c=\cup_\delta A_c^\delta$ the union of these annuli over all $\delta$ in $\de S$. It again follows from the Collar Lemma, applied to the boundary curves, that there exists $c'>0$ depending on $X$ such that the annuli $A_{c'}^\delta$ are embedded and pairwise disjoint. We can choose $c'$ such that for all $t<2$, $\mathscr{H}_t\cap\mathscr{A}_{c'}=\emptyset$, and in particular, $S\setminus\overline{(\mathscr{H}_t\cup\mathscr{A}_{c'})}$ is homeomorphic to $S^o$.

For any $p$ and $t<2$, any geodesic segment in $H^p_t$ either never leaves the cuspidal region and so intersects every horocycle in $H^p_t$ orthogonally, or it winds around the cusp before leaving the cuspidal region, and hence, when long enough, creates self-intersections. In the latter case, we call the segment \emph{returning}. In fact, the deeper into $H_t^p$ a returning segment goes the more times it must self-intersect, and there is a direct relationship between the length of a returning segment and its self-intersection number which we record below for future reference. We refer to \cite{Bas13} and \cite{BPT20} for more details about the behaviour of returning segments and for the proof of the below lemma: in particular, Proposition 3.4 of \cite{BPT20} gives a much more precise description of the relationship between how far an arc goes into a cusp and its self-intersection number. Letting $\iota(\cdot,\cdot)$ denote the (geometric) intersection number between curves or arcs (which is realised by their (ortho-)geodesic representatives), we have:

\begin{lemma}
    Let $d>0$, and suppose $\beta$ is a geodesic segment in $\mathscr{H}_1$ with both endpoints on $\de\mathscr{H}_1$ such that $\iota(\beta,\beta)\leq d$. Then there exists some positive $B=B(d)$ such that
    \[\ell_X(\beta)\leq B.\]
    \label{basmajian}
\end{lemma}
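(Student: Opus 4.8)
The plan is to work in the upper half-plane model of a single cuspidal region and play off two competing quantities: the length of $\beta$, which grows only logarithmically in how far $\beta$ penetrates into the cusp, against its self-intersection number, which grows linearly; bounding the latter by $d$ therefore caps the penetration depth, and hence the length. First I would reduce to a single cusp: since the regions $H_1^p$ are pairwise disjoint (as $1<2$) and $\beta\subset\mathscr{H}_1=\cup_{p}H_1^p$ is connected, $\beta$ lies entirely in one region $H^p_1$. Identifying $H^p_1$ with $\{z\in\mathds{H}^2 : \textup{Im}(z)>1\}/\langle z\mapsto z+1\rangle$, I lift $\beta$ to a geodesic arc $\tilde\beta$ in the horoball $\{\textup{Im}(z)>1\}$ with both endpoints on the bounding horocycle $\textup{Im}(z)=1$. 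A vertical geodesic escapes to the cusp and never returns to $\de\mathscr{H}_1$, so $\tilde\beta$ must be an arc of a Euclidean semicircle centred on the real axis, say of radius $r$ and centre $c$; its endpoints on $\textup{Im}(z)=1$ are then $c\pm\sqrt{r^2-1}$, its maximal height is $r$, and since the horoball is convex the whole arc stays in $\{\textup{Im}(z)>1\}$.

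A direct computation then gives the length in terms of the penetration depth $r$. Parametrising the semicircle by the angle $\theta$ at its centre, one finds $ds=d\theta/\sin\theta$ along the geodesic, so integrating between the two endpoints (where $\sin\theta=1/r$) yields
\[\ell_X(\beta)=2\log\!\big(r+\sqrt{r^2-1}\big).\]
As this is increasing in $r$, it suffices to bound $r$ in terms of $d$.

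The crux is to bound the self-intersection number below by a function of $r$ tending to infinity. Self-intersections of $\beta$ correspond to points where $\tilde\beta$ meets one of its translates $\tilde\beta+k$, $k\in\mathds{Z}\setminus\{0\}$, inside the horoball. Two semicircles of equal radius $r$ whose centres differ by $k$ cross at the single point of height $\sqrt{r^2-k^2/4}$, which lies above $\textup{Im}(z)=1$ exactly when $|k|<2\sqrt{r^2-1}$, and a short check confirms this crossing lies on both sub-arcs. Distinct values of $|k|$ give crossings at distinct heights, hence distinct double points of $\beta$ downstairs, so
\[\iota(\beta,\beta)\ \geq\ \big\lfloor 2\sqrt{r^2-1}\big\rfloor.\]
Combining the two displays, the hypothesis $\iota(\beta,\beta)\leq d$ forces $2\sqrt{r^2-1}<d+1$, whence $r<R(d)\coloneqq\sqrt{1+(d+1)^2/4}$, and therefore $\ell_X(\beta)\leq 2\log\!\big(R(d)+\sqrt{R(d)^2-1}\big)=:B(d)$.

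The main obstacle is this third step: making the intersection count rigorous. One must verify that each crossing of $\tilde\beta$ with a translate genuinely occurs on the sub-arcs (not merely the full semicircles) and strictly above the horocycle, and that the deck-group bookkeeping does not over- or under-count double points; this is why I claim only the lower bound, which is all that is needed and is precisely the qualitative content of Proposition 3.4 of \cite{BPT20}. The length computation and the single-cusp reduction are routine by comparison.
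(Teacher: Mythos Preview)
The paper does not give its own proof of this lemma: it states the result and refers the reader to \cite{Bas13} and \cite{BPT20}, noting that Proposition~3.4 of \cite{BPT20} supplies the precise relationship between cusp penetration depth and self-intersection number. There is therefore nothing in the paper to compare your argument against.

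That said, your argument is correct and is exactly the elementary upper half-plane computation underlying those references: the reduction to a single cusp, the identification of the lift $\tilde\beta$ with a semicircular arc of Euclidean radius $r$, the length formula $\ell_X(\beta)=2\log(r+\sqrt{r^2-1})$, and the count of crossings with translates $\tilde\beta+k$ at height $\sqrt{r^2-k^2/4}$ are all standard and correctly carried out. Your observation that crossings for $+k$ and $-k$ descend to the \emph{same} double point (so one should count values of $|k|$, not of $k$) is the only subtle bookkeeping step, and you handle it. One harmless quibble: the displayed bound $\iota(\beta,\beta)\geq\lfloor 2\sqrt{r^2-1}\rfloor$ is off by one in the measure-zero event that $2\sqrt{r^2-1}\in\mathds{Z}$ (since the inequality $|k|<2\sqrt{r^2-1}$ is strict), but your subsequent deduction $2\sqrt{r^2-1}<d+1$ survives with a non-strict inequality in that case, so the conclusion and the explicit $B(d)$ are unaffected.
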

\vspace{-\baselineskip}
It follows from Lemma \ref{basmajian} that any geodesic curve $\gamma$ with at most $d$ self-intersections never enters $\mathscr{H}_{\ee^{-B(d)}}$. Moreover, whenever a complete geodesic enters a small annulus around a boundary curve $\delta$, it spirals towards $\delta$, and unless it is asymptotic to $\delta$ it eventually leaves the annulus, creating self-intersections if long enough. It follows that if $\gamma$ is a geodesic curve with at most $d$ self-intersections, there exists some $c<c'$ depending on $d$ (and $X$) such that $\gamma$ never enters $\mathscr{A}_c$. Putting this together with the above gives us that $\gamma$ is contained in the compact subsurface $S\setminus(\mathscr{H}_{\ee^{-B(d)}}\cup\mathscr{A}_c)\subset S^o$. Note that as before, $S\setminus\overline{(\mathscr{H}_{\ee^{-B(d)}}\cup\mathscr{A}_c)}$ is homeomorphic to $S^o$. Furthermore, since $\Mod(S)$ preserves the self-intersection number of curves and arcs, the above is true for any curve of type $\gamma$. We summarise this well-known fact below for reference (and again refer to \cite{Bas13} and \cite{BPT20} for much more precise details).

\begin{lemma}
    Let $\gamma_0$ be a curve. Then there exists a compact subsurface $K\subset S^o$ with $K^o$ homeomorphic to $S^o$ such that for any $\gamma$ of type $\gamma_0$, the geodesic representative of $\gamma$ is contained in $K$.
    \label{lem:compact K}
\end{lemma}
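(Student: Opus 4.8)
The plan is to reduce the statement to a uniform bound on self-intersection number and then exploit the two mechanisms isolated in the preceding discussion: deep excursions into a cusp and deep excursions into a boundary collar both force self-intersections. Concretely, set $d=\iota(\gamma_0,\gamma_0)$. Since any representative of a mapping class is a homeomorphism and geometric intersection number is a topological invariant, $\Mod(S)$ preserves self-intersection numbers, so every $\gamma$ of type $\gamma_0$ satisfies $\iota(\gamma,\gamma)=d$. It therefore suffices to produce a single compact $K\subset S^o$, depending only on $d$ and $X$, containing the geodesic representative of every closed geodesic with at most $d$ self-intersections.

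For the cusps I would argue by contradiction using Lemma \ref{basmajian}, with $B(\cdot)$ the function supplied there. Suppose the geodesic representative of such a $\gamma$ entered $\mathscr{H}_{\ee^{-B(d)}}$. Since $\gamma$ is compact it cannot be contained in a cuspidal region, so it enters and leaves $\overline{\mathscr{H}_1}$, and the component $\beta$ of $\gamma\cap\overline{\mathscr{H}_1}$ through its deepest point is a geodesic segment with both endpoints on $\de\mathscr{H}_1$ reaching the horocycle bounding area $\ee^{-B(d)}$. Such a returning segment has length at least $2B(d)>B(d)$, so the contrapositive of Lemma \ref{basmajian} gives $\iota(\beta,\beta)>d$; but $\beta\subset\gamma$ forces $\iota(\beta,\beta)\le\iota(\gamma,\gamma)=d$, a contradiction. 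Hence no $\gamma$ of type $\gamma_0$ enters $\mathscr{H}_{\ee^{-B(d)}}$.

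The main point requiring care is the boundary collar, since Lemma \ref{basmajian} is stated only for cusps. Here I would use the standard geometry of the collar $A_{c'}^\delta$ about a boundary geodesic $\delta$: a geodesic segment penetrating close to $\delta$ must wind around $\delta$ before escaping, and the number of windings, hence the number of self-intersections it contributes, grows without bound as the depth of penetration increases. This is the direct analogue of the cusp estimate and can be extracted from the collar analysis of \cite{Bas13,BPT20}, or by passing to the double of $S$ across $\delta$. As a closed geodesic is compact it is never asymptotic to $\delta$, so each of its excursions into the collar is a genuine returning segment of this kind. Consequently there is some $c\in(0,c')$, depending only on $d$ and $X$, such that no $\gamma$ with $\iota(\gamma,\gamma)\le d$ enters $\mathscr{A}_c$.

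Finally I would set $K=S\setminus(\mathscr{H}_{\ee^{-B(d)}}\cup\mathscr{A}_c)$. As the complement of open neighbourhoods of all the cusps together with an open collar of $\de S$, the set $K$ is closed, contains no point of $\de S$, and, because truncating the cuspidal regions removes the only noncompact ends, is compact; as already noted in the discussion, $S\setminus\overline{(\mathscr{H}_{\ee^{-B(d)}}\cup\mathscr{A}_c)}$ is homeomorphic to $S^o$, so $K^o$ is homeomorphic to $S^o$. By the two previous steps the geodesic representative of every $\gamma$ of type $\gamma_0$ avoids both removed regions and therefore lies in $K$, which completes the proof.
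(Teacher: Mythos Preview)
Your proof is correct and follows essentially the same approach as the paper: the paper's argument is the discussion paragraph immediately preceding the lemma, which uses Lemma~\ref{basmajian} to exclude deep cusp excursions, the spiraling argument to exclude deep boundary-collar excursions, and the $\Mod(S)$-invariance of self-intersection number to make this uniform over the orbit. Your write-up simply fills in the contrapositive step for Lemma~\ref{basmajian} more explicitly.
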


Since multicurves have finitely many components, this lemma holds for multicurves by taking the union of the compact subsurfaces given for (the support of) each component.

Let $d$ be some non-negative integer, and let $\alpha$ be an infinite arc such that $\iota(\alpha,\alpha)=d$. Then similarly to the above, $\alpha\cap\mathscr{H}_{\ee^{-B(d)}}$ consists of exactly 2 components, which are simple half-infinite arcs. Equivalently, $\alpha\cap(S\setminus\mathscr{H}_{\ee^{-B(d)}})$ has exactly one component. We state this here for reference.

\begin{lemma}
    Let $\alpha$ be an infinite arc. Then there exists some positive $t_\alpha<1$, depending only on $\iota(\alpha,\alpha)$, such that $\alpha\cap(S\setminus\mathscr{H}_{t_\alpha})$ has exactly one component.
    \label{lem:arcs give td}
\end{lemma}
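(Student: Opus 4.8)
The plan is to take $t_\alpha=\ee^{-B(d)}$, where $d=\iota(\alpha,\alpha)$ and $B=B(d)$ is the constant supplied by Lemma \ref{basmajian}, and to show that $\alpha\cap\mathscr{H}_{t_\alpha}$ consists of exactly two components; the statement about $\alpha\cap(S\setminus\mathscr{H}_{t_\alpha})$ then follows by taking complements inside the interval parametrising $\alpha$. Since $B(d)>0$ we have $0<t_\alpha<1$, and $t_\alpha$ depends only on $d$, as required. Working with the geodesic representative of $\alpha$, I would first decompose $\alpha\cap\mathscr{H}_1$ into its connected components, each of which is a geodesic segment lying in a single cuspidal region $H_1^p$. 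Exactly two of these components are half-infinite: they are the two ends of $\alpha$ running up their respective cusps, each having a single endpoint on $\de\mathscr{H}_1$. Every other component is a \emph{returning segment}, with both endpoints on $\de\mathscr{H}_1$.

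The heart of the argument is to rule out deep returning segments, exactly as in the curve case discussed above. If $\beta$ is such a returning segment then, being a connected sub-arc of the geodesic $\alpha$, it satisfies $\iota(\beta,\beta)\leq\iota(\alpha,\alpha)=d$, so Lemma \ref{basmajian} gives $\ell_X(\beta)\leq B$. On the other hand, lifting $H_1^p$ to the region $\{\textup{Im}(z)>1\}\subset\mathds{H}^2$, the endpoints of $\beta$ sit at height $1$, while any point of $\beta\cap\mathscr{H}_{t_\alpha}$ would sit at height at least $1/t_\alpha=\ee^{B}$. Using the standard estimate that the hyperbolic distance between two points $z,z'$ is at least $|\log\textup{Im}(z)-\log\textup{Im}(z')|$, the two sub-segments of $\beta$ joining its deepest point to each of its endpoints have length at least $B$ each, so $\ell_X(\beta)\geq 2B>B$, a contradiction. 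Hence no returning segment meets $\mathscr{H}_{t_\alpha}$.

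It remains to understand the two infinite ends. Near its cusp each end lifts to a vertical geodesic ray in $\mathds{H}^2$, along which the height $\textup{Im}(z)$ is strictly monotonic; since $H_1^p$ is embedded (as $t_\alpha<1<2$), the end projects to a simple ray on which the height increases monotonically from $1$ up to the cusp. Consequently each end crosses $\de\mathscr{H}_{t_\alpha}$ exactly once, and its intersection with $\mathscr{H}_{t_\alpha}$ is a single connected sub-ray. Combining this with the previous paragraph, every point of $\alpha$ lying in $\mathscr{H}_{t_\alpha}\subset\mathscr{H}_1$ lies on one of the two ends, and each end contributes one component; thus $\alpha\cap\mathscr{H}_{t_\alpha}$ has exactly two components and $\alpha\cap(S\setminus\mathscr{H}_{t_\alpha})$ has exactly one. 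The main obstacle I anticipate is the bookkeeping in this last step when both ends descend the same cusp: one must check that, deep enough in the cusp, the two ends project to distinct, hence disjoint, vertical rays, so that they genuinely give two separate components rather than accidentally merging into one.
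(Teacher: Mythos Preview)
Your argument is correct and matches the paper's approach exactly: the paper also sets $t_\alpha=\ee^{-B(d)}$ with $B(d)$ from Lemma~\ref{basmajian} and deduces (in one sentence, referring back to the curve case) that the only pieces of $\alpha$ in $\mathscr{H}_{t_\alpha}$ are the two half-infinite ends. Your worry at the end is unnecessary: the ``components'' of $\alpha\cap(S\setminus\mathscr{H}_{t_\alpha})$ are most naturally read as the connected components of the preimage $\alpha^{-1}(S\setminus\mathscr{H}_{t_\alpha})\subset(0,1)$, and in the domain the two ends are the intervals $(0,s_0)$ and $(s_1,1)$, automatically distinct regardless of whether they descend the same cusp.
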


We also need the fact that if a geodesic goes far enough into a cusp then it must intersect itself inside $\mathscr{H}_2$. To see this, suppose $\beta$ is a returning geodesic segment in $\mathscr{H}_2$ that enters $H^p_t$ for some $t\leq1$ and some $p\in\mathfrak{C}$. Consider the cuspidal region $H^p_2$ and identify it with
\[\Big\{z\in\mathds{H}^2\Big  \vert \ \textup{Im}(z)>\frac{1}{2}\Big\}/\langle z\mapsto z+1\rangle.\]
A fundamental domain for the action of $z\mapsto z+1$ is the region in $\mathds{H}^2$ bounded by $x=0$ and $x=1$. Note that any geodesic in $\mathds{H}^2$ neither of whose endpoints are at $\infty$ which intersects the line $y=\frac{1}{t}$ also intersects its translate under the map $z\mapsto z+1$, and this intersection occurs above the line $y=\frac{1}{2}$. Hence $\beta$ intersects itself inside the embedded cuspidal region $H^p_2$. In particular, any simple geodesic not asymptotic to a puncture cannot enter $\mathscr{H}_1$. We record this here for reference, and refer to Section 1.3 of \cite{Mcshane} and Proposition 3.2 of \cite{BPT20} for more details.

\begin{lemma}
    Let $0<t\leq1$. If $\beta$ is a geodesic segment in $\mathscr{H}_2$ with both endpoints on $\de\mathscr{H}_2$, and $\beta\cap\mathscr{H}_t\neq\emptyset$, then $\iota(\beta,\beta)\geq1$.
    \label{lem:beta self-intersects at least once}
\end{lemma}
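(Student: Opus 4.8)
The plan is to pass to the universal cover and exploit the explicit parabolic model of the cusp. Since $\beta$ is connected, meets $\mathscr{H}_t$, and has both endpoints on $\de\mathscr{H}_2$, the hypothesis localises it to a single cuspidal region $H^p_2$ for some $p\in\mathfrak{C}$. Identifying $H^p_2$ with $\{z\in\mathds{H}^2\mid\textup{Im}(z)>\tfrac12\}/\langle z\mapsto z+1\rangle$, I would lift $\beta$ to a geodesic segment $\tilde\beta$ in the strip $\{\textup{Im}(z)>\tfrac12\}\subset\mathds{H}^2$. Because $H^p_2$ is embedded with deck group generated by the parabolic $z\mapsto z+1$, self-intersections of $\beta$ inside $H^p_2$ correspond exactly to intersections of $\tilde\beta$ with its translates $\tilde\beta+k$, $k\neq0$, occurring at height greater than $\tfrac12$. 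It therefore suffices to produce a single such intersection with $k=1$.

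Next I would pin down the shape of $\tilde\beta$. Its endpoints lie on the horocycle $\textup{Im}(z)=\tfrac12$ at finite height, so $\tilde\beta$ cannot lie on a vertical line and hence is an arc of a Euclidean semicircle $C$ centred on the real axis, of some radius $r$. Since $\beta$ enters $\mathscr{H}_t$ with $t\leq1$, the lift $\tilde\beta$ rises above height $1/t\geq1$, forcing $r>1$; and as $\tilde\beta$ has both endpoints at height $\tfrac12$ while climbing above height $1$, it is precisely the sub-arc $\{w\in C\mid\textup{Im}(w)\geq\tfrac12\}$. The translate $\tilde\beta+1$ then lies on the semicircle $C+1$ of the same radius $r$ whose centre is one unit to the right.

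Finally I would show that $C$ and $C+1$ meet above the horocycle. Two semicircles of equal radius $r$ with centres a distance $1$ apart intersect precisely when $r>\tfrac12$, and their upper intersection point sits on the perpendicular bisector of the centres at height $\sqrt{r^2-\tfrac14}$. Since $r>1$, this height exceeds $\tfrac{\sqrt3}{2}>\tfrac12$, so the intersection point lies strictly above $\textup{Im}(z)=\tfrac12$ and hence on both upper sub-arcs, namely on $\tilde\beta$ and on $\tilde\beta+1$. Projecting back to $H^p_2$ yields a genuine self-crossing of $\beta$, giving $\iota(\beta,\beta)\geq1$. The point demanding the most care---and the main obstacle to a fully rigorous write-up---is verifying that the intersection genuinely lands on the segments rather than merely on the ambient semicircles, and that it descends to an honest transverse self-intersection in the embedded region $H^p_2$; both follow from the height estimate above together with the embeddedness of $H^p_2$.
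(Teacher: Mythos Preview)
Your proposal is correct and follows essentially the same approach as the paper: both lift to the parabolic model $\{\textup{Im}(z)>\tfrac12\}/\langle z\mapsto z+1\rangle$, observe that the lift lies on a Euclidean semicircle of radius exceeding $1$, and deduce that it meets its unit translate above the line $\textup{Im}(z)=\tfrac12$. Your version is somewhat more explicit than the paper's, computing the intersection height $\sqrt{r^2-\tfrac14}$ and carefully checking that the crossing lies on the relevant sub-arcs rather than merely on the ambient semicircles.
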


Recall that $\mathcal{ML}(S)$ is the space of compactly supported measured laminations on $S^o$. The support of any $\lambda\in\mathcal{ML}(S)$ is a union of simple geodesics and thus it follows from Lemma \ref{lem:beta self-intersects at least once} that the support of $\lambda$ is contained in $S^o\setminus\mathscr{H}_1$. In fact, there exists a compact subsurface $K\subseteq S^o\setminus\mathscr{H}_1$ which contains the support of $\mathcal{ML}(S)$.

As mentioned in the introduction, the idea in this paper is to find a nice way to associate curves to arcs so that we can use Mirzakhani's theorem about counting curves to count arcs. Mirzakhani's theorem is stated for complete finite-area hyperbolic metrics on the interior $S^o$, and we will need to use the result for our metric $X$ on $S$ which has geodesic boundary. This issue is resolved by instead using the following generalisation.
\begin{theorem}[\cite{EPS16}, Corollary 1.3]
    Let $Y$ be a complete Riemannian metric on $S^o=S\setminus\de S$. Then for any multicurve $\gamma_0$,
    \[\lim_{L\to\infty}\frac{|\{\gamma \ \textup{of type} \ \gamma_0\mid\ell_Y(\gamma)\leq L\}|}{L^{6g-6+2(n+p)}}=\frac{\cc(\gamma_0)}{\bb_{g,n+p}}\mthu(\{\ell_Y(\cdot)\leq1\})\]
    where $\mthu$, $\cc(\gamma_0)$ and $\bb_{g,n+p}$ are as in \eqref{eq:th mirz} and $\ell_Y(\gamma)$ is the length of a shortest curve homotopic to $\gamma$.
    \label{th:EPS counting}
\end{theorem}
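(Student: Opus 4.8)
The plan is to deduce the statement from the theory of geodesic currents, following the strategy of Erlandsson and Souto. Let $\C(S)$ denote the space of geodesic currents on $S^o$, equipped with the weak-$*$ topology and the $\Mod(S)$-action, and recall that $\ML(S)$ embeds in $\C(S)$ as a closed, $\Mod(S)$-invariant, piecewise-linear subset carrying the Thurston measure $\mthu$. Weighted multicurves sit inside $\ML(S)$ as a dense set of points, and the multicurves of type $\gamma_0$ are precisely the $\PMod(S)$-orbit of the point representing $\gamma_0$. I would reformulate the counting problem as the evaluation of a rescaled counting measure: writing $d=6g-6+2(n+p)$ and
\[\nu_L \coloneqq \frac{1}{L^{d}}\sum_{\gamma \ \textup{of type} \ \gamma_0}\delta_{\frac{1}{L}\gamma},\]
the quantity we wish to understand is $\nu_L(\{\ell_Y(\cdot)\le 1\})$, once we know that $\ell_Y$ makes sense as a homogeneous function on $\ML(S)$.

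First I would check that the length function $\ell_Y$, a priori defined only on curves as the length of a shortest representative, extends to a continuous, degree-one homogeneous function on all of $\ML(S)$ (indeed on $\C(S)$). Since $Y$ is a complete Riemannian metric on $S^o$, it is bilipschitz to a fixed complete finite-area hyperbolic metric on every compact subsurface, and by Lemma \ref{lem:compact K} the support of every lamination is confined to such a compact piece. This comparison yields two-sided bounds of $\ell_Y$ against a hyperbolic length function, from which continuity on $\ML(S)$ and properness of the sublevel set $\{\ell_Y(\cdot)\le 1\}$ follow; homogeneity is immediate from the definition on weighted laminations. Crucially, $\{\ell_Y(\cdot)\le 1\}$ is then compact with $\mthu$-null boundary.

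The heart of the argument is the weak-$*$ convergence
\[\nu_L \xrightarrow{\ L\to\infty\ } \frac{\cc(\gamma_0)}{\bb_{g,n+p}}\,\mthu.\]
This is where Mirzakhani's theorem enters: applying the already-known case of a hyperbolic metric on $S^o$ identifies the limiting measure along the sublevel sets of the hyperbolic length as a fixed multiple of $\mthu$, and the $\Mod(S)$-invariance and homogeneity of $\mthu$, together with the fact that the rescaled $\PMod(S)$-orbit of $\gamma_0$ equidistributes with respect to $\mthu$, force the same limit for every continuous homogeneous test function — in particular for $\ell_Y$. I expect this convergence to be the main obstacle: it is exactly the content one must extract from Mirzakhani's work (or reprove via the ergodic-theoretic input on $\ML(S)$), and it is the step that makes the limit independent of the metric used to probe it.

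Finally I would conclude by evaluation. Because $\ell_Y$ is homogeneous of degree one, we have $\{\gamma \ \textup{of type} \ \gamma_0 : \ell_Y(\gamma)\le L\} = \{\gamma : \ell_Y(\tfrac{1}{L}\gamma)\le 1\}$, so the normalized count equals $\nu_L(\{\ell_Y(\cdot)\le 1\})$ exactly. Since the boundary of $\{\ell_Y(\cdot)\le 1\}$ has $\mthu$-measure zero, the portmanteau theorem upgrades the weak-$*$ convergence of the previous step to convergence of the measures of this set, giving
\[\lim_{L\to\infty}\frac{|\{\gamma \ \textup{of type} \ \gamma_0 : \ell_Y(\gamma)\le L\}|}{L^{d}} = \frac{\cc(\gamma_0)}{\bb_{g,n+p}}\,\mthu(\{\ell_Y(\cdot)\le 1\}),\]
which is the claimed formula with $d=6g-6+2(n+p)$.
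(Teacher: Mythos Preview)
The paper does not prove this theorem; it is quoted verbatim as Corollary~1.3 of \cite{EPS16} and used as a black box to obtain Corollary~\ref{corol:count with X}. There is therefore no in-paper proof to compare your proposal against.

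That said, your sketch does track the architecture of the Erlandsson--Souto argument: recast the count as the mass of a rescaled orbit measure $\nu_L$ on $\ML(S)$, establish weak-$*$ convergence $\nu_L\to\frac{\cc(\gamma_0)}{\bb_{g,n+p}}\mthu$, extend $\ell_Y$ to a continuous homogeneous functional on $\ML(S)$, and conclude via portmanteau on the sublevel set. Two points deserve care. First, the continuity of $\ell_Y$ on $\ML(S)$ for an arbitrary complete Riemannian metric is not a direct consequence of a bilipschitz comparison on a compact core: two-sided length bounds against a hyperbolic metric do not by themselves give continuity of the extension, and the actual argument in \cite{EPS16} goes through geodesic currents (intersection with a suitable current associated to $Y$) to obtain a genuine continuous extension. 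Second, the weak-$*$ convergence you correctly identify as the crux is precisely the content of the cited result (building on Mirzakhani's work and the ergodicity of the $\Mod(S)$-action on $\ML(S)$); your outline acknowledges this but does not supply it, so what you have written is a faithful summary of the strategy of \cite{EPS16} rather than an independent proof.
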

To see that this result implies that we can count curves in our setting, let $\gamma_0$ be a multicurve on $S$ and let $K=K(\gamma_0)$ be the compact subsurface of $S^o$ given by Lemma \ref{lem:compact K}. By the discussion after Lemma \ref{lem:beta self-intersects at least once}, we may assume that $K$ is such that $\ML(S)\subset K$. Take any complete Riemannian metric $Y$ on $S^o$ which agrees with $X$ on $K$. Since the geodesic representative of every multicurve $\gamma$ of type $\gamma_0$ is contained in $K$ and $\ML(S)\subset K$, we have that $\ell_Y(\gamma)=\ell_X(\gamma)$ for all $\gamma$ of type $\gamma_0$ and $\ell_Y(\lambda)=\ell_X(\lambda)$ for all $\lambda\in\ML(S)$. We record this consequence for reference.
\begin{corollary}
    Let $X$ be a complete, finite-area, hyperbolic metric on $S$ such that $\de S$ is geodesic. Then for any multicurve $\gamma_0$,
    \[\lim_{L\to\infty}\frac{|\{\gamma \ \textup{of type} \ \gamma_0\mid\ell_X(\gamma)\leq L\}|}{L^{6g-6+2(n+p)}}=\frac{\cc(\gamma_0)}{\bb_{g,n+p}}\mthu(\{\ell_X(\cdot)\leq1\})\]
    where $\mthu$, $\cc(\gamma_0)$ and $\bb_{g,n+p}$ are as in \eqref{eq:th mirz}.
    \label{corol:count with X}
\end{corollary}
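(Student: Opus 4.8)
The plan is to deduce Corollary \ref{corol:count with X} from Theorem \ref{th:EPS counting}, whose hypothesis requires a complete metric on the open surface $S^o$, by replacing $X$ (which lives on $S$ and has geodesic boundary) with a complete Riemannian metric $Y$ on $S^o$ that reproduces the relevant $X$-lengths exactly. The guiding observation is that both sides of the desired limit depend on the metric only through the length function evaluated on a compactly supported family of objects: the numerator counts geodesic representatives of curves of type $\gamma_0$, and the denominator involves $\mthu(\{\ell_X(\cdot)\leq1\})$, which depends only on the restriction of $\ell_X$ to $\ML(S)$. So it is enough to build $Y$ agreeing with $X$ wherever these objects live.

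First I would fix the compact subsurface $K\subset S^o$ given by Lemma \ref{lem:compact K} applied to $\gamma_0$, so that the geodesic representative of every multicurve of type $\gamma_0$ is contained in $K$; using the discussion after Lemma \ref{lem:beta self-intersects at least once}, I would enlarge $K$ so that it also contains the support of every $\lambda\in\ML(S)$, keeping $K^o$ homeomorphic to $S^o$. I would then take $Y$ to be a complete Riemannian metric on $S^o$ that restricts to $X$ on $K$. A convenient concrete choice is the Nielsen (funnel) extension: attach an expanding hyperbolic funnel to each component of $\de S$ and leave the cusps untouched, obtaining a complete hyperbolic metric on $S^o$ whose convex core is exactly $S$, so that $Y=X$ on $S\supseteq K$.

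The technical heart is to verify $\ell_Y(\gamma)=\ell_X(\gamma)$ for every $\gamma$ of type $\gamma_0$, and $\ell_Y(\lambda)=\ell_X(\lambda)$ for every $\lambda\in\ML(S)$. One direction is immediate: the $X$-geodesic representative of $\gamma$ lies in $K$, where $Y=X$, so it is a $Y$-competitor of the same length and $\ell_Y(\gamma)\leq\ell_X(\gamma)$; the same holds for laminations. The reverse inequality is the main obstacle, since a shortest $Y$-representative could a priori be shortened by venturing into $S^o\setminus K$, where $Y$ is no longer governed by $X$. This is precisely where the funnel construction is used: the nearest-point projection onto the convex core $S$ is distance-nonincreasing, so any $Y$-representative can be pushed into $S$ without increasing its length, and there it is measured by $X$. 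Hence the $Y$-infimum over the homotopy class is attained in $K$ by the $X$-geodesic, giving $\ell_Y(\gamma)=\ell_X(\gamma)$, and the identical argument applied to the (compactly supported) support of $\lambda$ yields $\ell_Y(\lambda)=\ell_X(\lambda)$.

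Granting these equalities, the two metrics produce identical data: for every $L$ we have $\{\gamma \ \textup{of type} \ \gamma_0\mid\ell_Y(\gamma)\leq L\}=\{\gamma \ \textup{of type} \ \gamma_0\mid\ell_X(\gamma)\leq L\}$, and $\{\ell_Y(\cdot)\leq1\}=\{\ell_X(\cdot)\leq1\}$ as subsets of $\ML(S)$, so $\mthu(\{\ell_Y(\cdot)\leq1\})=\mthu(\{\ell_X(\cdot)\leq1\})$. Applying Theorem \ref{th:EPS counting} to $Y$ and substituting these equalities then yields exactly the claimed limit. The only delicate point in the whole argument is the reverse length inequality, secured by choosing $Y$ with expanding ends admitting a distance-nonincreasing retraction onto $S$; everything else is a direct transcription of the counting theorem.
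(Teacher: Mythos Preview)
Your proposal is correct and follows essentially the same route as the paper: choose the compact $K$ from Lemma \ref{lem:compact K} enlarged to contain the support of $\ML(S)$, take a complete Riemannian metric $Y$ on $S^o$ agreeing with $X$ on $K$, and then transcribe Theorem \ref{th:EPS counting}. The paper simply asserts $\ell_Y(\gamma)=\ell_X(\gamma)$ and $\ell_Y(\lambda)=\ell_X(\lambda)$ for ``any'' such $Y$, whereas you are more careful, singling out the Nielsen funnel extension and using the distance-nonincreasing retraction onto the convex core to secure the reverse inequality; this extra justification is a genuine improvement in rigor over the paper's sketch, but the underlying strategy is identical.
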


\section{Compact arcs}

\subsection{Relating compact arcs and curves}

\label{sec:arcs and curves}

In this section, we will discuss how to associate multicurves to compact multi-arcs in a way that respects length, up to some well-behaved error. We will achieve this using the nice geometric properties of pairs of pants.

First, we will discuss how to associate a single curve to a single compact arc. Fix an orientation on $S$, which induces an orientation on the boundary components. Let $\alpha$ be some compact arc in $S$, oriented from $\alpha(0)$ to $\alpha(1)$. The endpoints $\alpha(0)$ and $\alpha(1)$ each lie on a boundary component, which we denote by $\delta_0^\alpha$ and $\delta_1^\alpha$ respectively: note that these are not necessarily distinct. Pick basepoints $p_0$ and $p_1$ on $\delta_0^\alpha$ and $\delta_1^\alpha$ respectively, and consider these boundary components as loops based at their respective basepoints. Apply a homotopy to $\alpha$ so that $\alpha(0)=p_0$ and $\alpha(1)=p_1$. Then we define the \emph{curve associated to} $\alpha$ to be the geodesic curve $\gamma_\alpha$ (freely) homotopic to the concatenated path
\begin{equation*}
    \alpha^{-1}\cdot\delta_1^\alpha\cdot\alpha\cdot\delta_0^\alpha
\end{equation*}
which starts and ends at $p_0$. In particular, in the case that $\alpha$ is simple and $\delta_0^\alpha\neq\delta_1^\alpha$, $\gamma_\alpha$ is homotopic to the boundary of a small neighbourhood of the union of $\alpha$, $\delta_0^\alpha$ and $\delta_1^\alpha$. See Figure \ref{fig:arcs and curves}. Recall that we identify arcs and curves that differ by an orientation, and note that the arc $\alpha'$ which differs from $\alpha$ only in orientation gives rise to exactly the same curve as $\alpha$, even in orientation. 

\begin{figure}[h]
    \centering
    \includegraphics[scale=.34]{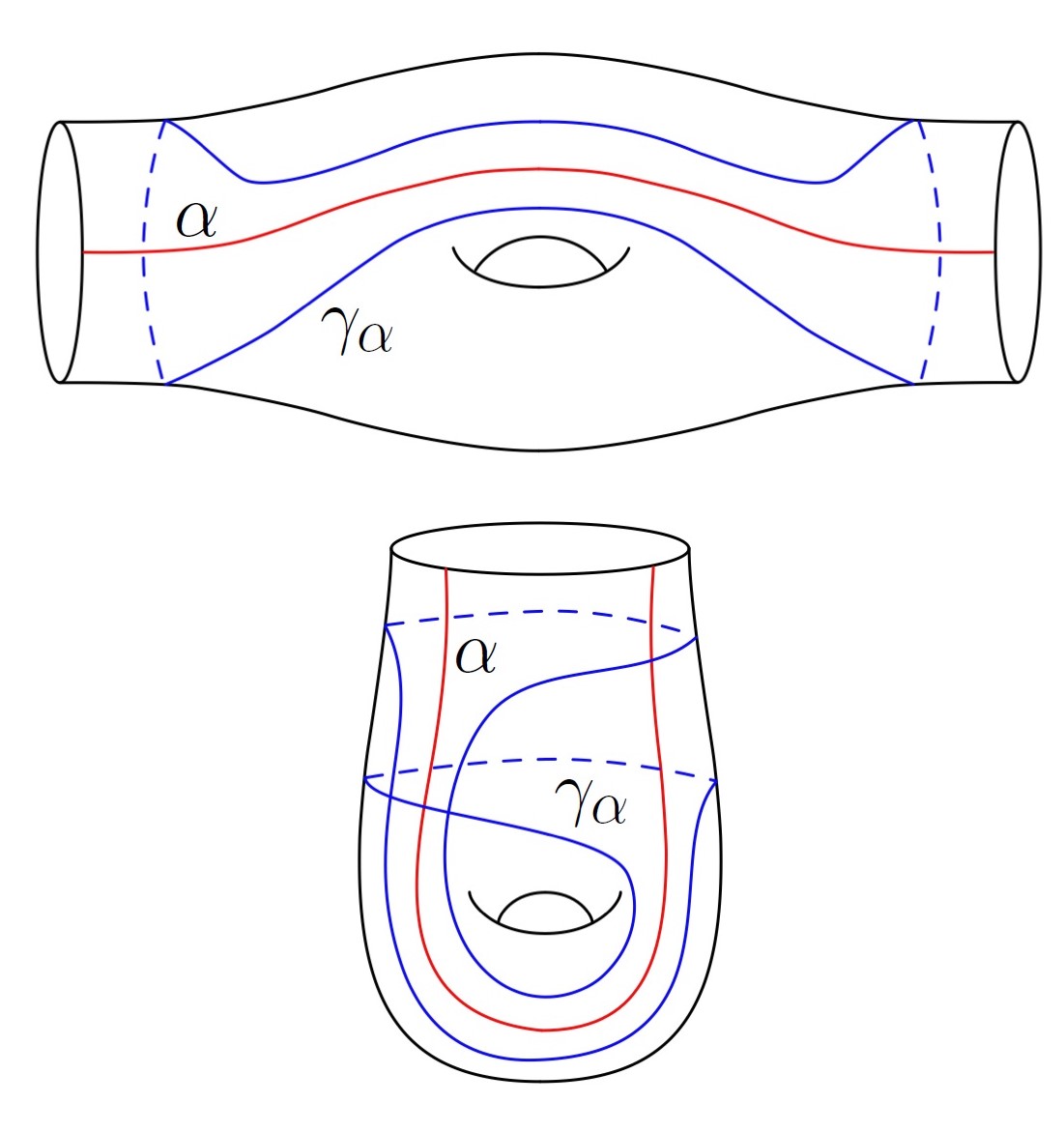}
    \caption{Examples of compact arcs (in red) and their associated curves (in blue).}
    \label{fig:arcs and curves}
\end{figure}

Now, let $P$ be a topological surface with $g=0$, $n=3$ and $p=0$ known as a pair of pants, and fix an orientation on $P$. The boundary components of $P$ are referred to as \emph{cuffs}, and for each pair of cuffs the unique homotopy class of simple compact arcs between them is called a \emph{seam}. We label the cuffs by $\delta_0^P, \delta_1^P$ and $\delta_2^P$ and the seam between $\delta_0^P$ and $\delta_1^P$ by $\alpha_P$.

For any arc $\alpha$ on $S$ with endpoints on $\delta_0^\alpha$ and $\delta_1^\alpha$, there exists an orientation-preserving immersion $\ia\colon P\to S$ such that
\begin{align*}
    \ia(\delta_0^P)&=\delta_0^\alpha, \\
    \ia(\delta_1^P)&=\delta_1^\alpha, \\
    \ia(\alpha_P)&=\alpha.
\end{align*}
Note that the images of the two cuffs and the seam under this map determine the image of the third cuff up to homotopy, since this is exactly the (free) homotopy class of $\alpha^{-1}\cdot\delta_1^\alpha\cdot\alpha\cdot\delta_0^\alpha$. That is,
\begin{equation}
    \gamma_\alpha=\iota_\alpha(\delta_2^P)
    \label{gamma alpha is iota delta}
\end{equation}
(up to homotopy). Let $\ia'$ be another immersion which satisfies the above. Then since they agree on $\delta_0^P$, $\delta_1^P$ and $\alpha_P$, we have that the images $\ia(\delta_2^P)$ and $\ia'(\delta_2^P)$ of the third cuff are homotopic and such a homotopy extends to a homotopy from $\ia(P)$ to $\ia'(P)$. Thus any two such immersions of $P$ are homotopic. 

In the case that $\alpha=\sum_{i=1}^m a^i\alpha^i$ is a compact multi-arc, we define the \emph{multicurve associated to} $\alpha$ to be the weighted sum of the the curves associated to its components. That is,
\begin{equation}
    \gamma_\alpha=\sum_{i=1}^m a^i\gamma_{\alpha^i}=\sum_{i=1}^m a^i\iota_{\alpha^i}(\delta_2^P).
    \label{def:ga multi}
\end{equation}
In the remainder of the section, we will first prove several statements for single compact arcs before demonstrating how these also hold for compact multi-arcs.

Let $\mathcal{A}(S)$ and $\mathcal{C}(S)$ denote the sets of compact arcs and curves on $S$ respectively. We define the \emph{association map} $I\colon\mathcal{A}(S)\to\mathcal{C}(S)$ by
\[I(\alpha)=\gamma_\alpha.\]
First, we show that $I$ distorts the length of arcs in a controlled way.

\begin{lemma}
    Let $X$ be a complete, finite-area, hyperbolic metric on $S$ such that $\de S$ is geodesic. There exists a constant $C(X)>0$ such that for any $\alpha\in\mathcal{A}(S)$, 
    \[|\ell_X(I(\alpha))-2\ell_X(\alpha)|\leq C(X),\]
    where $I(\alpha)=\gamma_\alpha$ is the curve associated to $\alpha$.
    \label{lem:bound compact}
\end{lemma}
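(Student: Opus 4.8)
The plan is to exploit the explicit construction of $\gamma_\alpha$ as the geodesic representative of the concatenated loop $\alpha^{-1}\cdot\delta_1^\alpha\cdot\alpha\cdot\delta_0^\alpha$, and to compare the length of this geodesic to twice the length of the orthogeodesic representative of $\alpha$. The heuristic is that the associated curve traverses $\alpha$ twice (once in each direction), picks up contributions from pieces of the two boundary curves $\delta_0^\alpha,\delta_1^\alpha$, and then gets shortened when we pass to the geodesic representative. The claim is that all of these corrections, as well as the defect coming from the fact that the orthogeodesic $\alpha$ need not meet the geodesic $\gamma_\alpha$ tangentially at the boundary, are bounded by a constant depending only on $X$ and \emph{not} on $\alpha$. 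The key point driving the uniformity is that everything happens inside the boundary annuli $\mathscr{A}_{c'}$: away from fixed collar neighbourhoods of $\de S$, the curve $\gamma_\alpha$ and the doubled arc $\alpha$ essentially coincide.

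First I would set up a concrete comparison curve. Let $\hat\alpha$ denote the orthogeodesic representative of $\alpha$, meeting $\de S$ orthogonally at points $q_0\in\delta_0^\alpha$ and $q_1\in\delta_1^\alpha$. I would build an explicit piecewise-geodesic loop $\eta$ freely homotopic to $\gamma_\alpha$ by concatenating: the orthogeodesic $\hat\alpha$, then an arc of $\delta_1^\alpha$ from $q_1$ back to $q_1$ going once around (or the relevant sub-arc determined by the basepoint/homotopy data), then $\hat\alpha$ traversed in reverse, then an arc of $\delta_0^\alpha$. Since $\gamma_\alpha$ is by definition the geodesic representative of this free homotopy class, we have the upper bound
\[
\ell_X(\gamma_\alpha)\;\le\;\ell_X(\eta)\;=\;2\,\ell_X(\hat\alpha)+\big(\text{arc of }\delta_1^\alpha\big)+\big(\text{arc of }\delta_0^\alpha\big)\;\le\;2\,\ell_X(\alpha)+\big(\ell_X(\delta_0^\alpha)+\ell_X(\delta_1^\alpha)\big).
\]
Because $\de S$ consists of only $n$ fixed boundary geodesics, the lengths $\ell_X(\delta_0^\alpha),\ell_X(\delta_1^\alpha)$ are bounded by $\max_\delta\ell_X(\delta)$, a constant depending only on $X$. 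This gives the inequality $\ell_X(\gamma_\alpha)-2\ell_X(\alpha)\le C_1(X)$.

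The reverse inequality $2\ell_X(\alpha)-\ell_X(\gamma_\alpha)\le C_2(X)$ is where the main obstacle lies. The idea is to recover (a curve close to) $\alpha$ from $\gamma_\alpha$. Using the immersion $\iota_\alpha\colon P\to S$ from \eqref{gamma alpha is iota delta}, the curve $\gamma_\alpha=\iota_\alpha(\delta_2^P)$ is the image of the third cuff, while $\alpha=\iota_\alpha(\alpha_P)$ is the image of the seam, with both $\delta_0^\alpha,\delta_1^\alpha$ being images of cuffs. The geometry of a hyperbolic pair of pants controls the seam length in terms of the cuff lengths: the orthogeodesic seam length is a monotone function of the three cuff lengths, so I can bound $\ell_X(\alpha)$ above in terms of $\ell_X(\gamma_\alpha)$ plus the bounded quantities $\ell_X(\delta_0^\alpha),\ell_X(\delta_1^\alpha)$. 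Concretely, one argues that the orthogeodesic $\hat\alpha$ must, outside the fixed collars $\mathscr{A}_{c'}$, run close to a sub-arc of $\gamma_\alpha$, and cutting $\gamma_\alpha$ at its closest approaches to the two boundary geodesics produces a compact arc homotopic to $\alpha$ whose length exceeds $\frac12\ell_X(\gamma_\alpha)$ by at most a bounded error; the boundedness again comes from the Collar Lemma, which forces any geodesic crossing the collar to spend a uniformly bounded length there before reaching $\de S$ orthogonally. I expect the delicate part to be making the "run close to a sub-arc of $\gamma_\alpha$" comparison quantitative without any dependence on how complicated $\alpha$ is --- the right tool is that the Hausdorff distance between $\gamma_\alpha$ and the doubled orthogeodesic $\hat\alpha\cup\hat\alpha^{-1}$ is bounded purely in terms of the collar width, since the two only diverge inside $\mathscr{A}_{c'}$. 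Combining the two bounds with $C(X)=\max\{C_1(X),C_2(X)\}$ yields $|\ell_X(\gamma_\alpha)-2\ell_X(\alpha)|\le C(X)$, as required.
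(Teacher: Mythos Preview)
Your upper bound $\ell_X(\gamma_\alpha)\le 2\ell_X(\alpha)+\ell_X(\delta_0^\alpha)+\ell_X(\delta_1^\alpha)$ via the explicit competitor loop is correct and clean.

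The lower bound sketch, however, has a genuine gap. Neither of the two mechanisms you invoke actually closes it. First, saying the seam length is a ``monotone function of the three cuff lengths'' is true, but monotonicity alone does not give the asymptotic $\ell_X(\alpha)\sim\tfrac12\ell_X(\gamma_\alpha)$; you need the actual growth rate. Second, the Collar Lemma / Hausdorff-distance argument is misdirected: the Collar Lemma gives an embedded annulus around each $\delta_i^\alpha$, but it does \emph{not} tell you that $\gamma_\alpha$ ever enters that annulus, i.e.\ that $\gamma_\alpha$ comes uniformly close to $\delta_0^\alpha$ and $\delta_1^\alpha$. That closeness is exactly the content of the lower bound, so you cannot assume it. (And bounded Hausdorff distance between two curves does not in general bound the difference of their lengths.)

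What would make your ``cutting $\gamma_\alpha$'' argument work is the fact that in the immersed pair of pants $\iota_\alpha(P)$ the \emph{other} seams---the common perpendiculars from $\gamma_\alpha$ to $\delta_0^\alpha$ and to $\delta_1^\alpha$---have length bounded purely in terms of $\ell_X(\delta_0^\alpha),\ell_X(\delta_1^\alpha)$, uniformly in $\ell_X(\gamma_\alpha)$. That is true, but it comes from the right-angled hexagon identity for a hyperbolic pair of pants, not from the Collar Lemma. Once you appeal to that identity you may as well use it directly, which is what the paper does: the formula
\[
\cosh\tfrac{\ell_X(\gamma_\alpha)}{2}=\sinh\tfrac{\ell_X(\delta_0^\alpha)}{2}\sinh\tfrac{\ell_X(\delta_1^\alpha)}{2}\cosh\ell_X(\alpha)-\cosh\tfrac{\ell_X(\delta_0^\alpha)}{2}\cosh\tfrac{\ell_X(\delta_1^\alpha)}{2}
\]
(Buser, Theorem~2.4.1) lets one write $\ell_X(\gamma_\alpha)-2\ell_X(\alpha)$ as an explicit continuous function of $\ell_X(\alpha)$ with a finite limit $2\ln\big(\sinh\tfrac{\ell_X(\delta_0^\alpha)}{2}\sinh\tfrac{\ell_X(\delta_1^\alpha)}{2}\big)$ at infinity, hence bounded. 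Taking the maximum over the finitely many ordered pairs of boundary components yields $C(X)$, handling both inequalities at once and avoiding the competitor-arc bookkeeping.
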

\begin{proof}
    This will follow from basic hyperbolic geometry. Let $\alpha\in\mathcal{A}(S)$. 
    
    Choosing the lengths of 2 cuffs and the seam between them on a pair of pants fixes the length of the third cuff; that is, the lengths of $\delta_0^\alpha$, $\delta_1^\alpha$ and $\alpha$ determine the length of $\gamma_\alpha$. More precisely,
    \begin{equation}
        \cosh{\frac{\ell_X(\gamma_\alpha)}{2}}=\sinh{\frac{\ell_X(\delta_0^\alpha)}{2}}\sinh{\frac{\ell_X(\delta_1^\alpha)}{2}}\cosh{\ell_X(\alpha)}
        -\cosh{\frac{\ell_X(\delta_0^\alpha)}{2}}\cosh{\frac{\ell_X(\delta_1^\alpha)}{2}}
        \label{eq:cosh eq}
    \end{equation}
    (See Theorem 2.4.1 of \cite{Buser}). Let $\de S=\{\delta_1,\dots,\delta_n\}$, then for some $i,j$, $\delta_0^{\alpha}=\delta_i$ and $\delta_1^{\alpha}=\delta_j$. To simplify notation, we will write
    \begin{align*}
        A_{i,j}&=\sinh{\frac{\ell_X(\delta_i)}{2}}\sinh{\frac{\ell_X(\delta_j)}{2}}, \\
        B_{i,j}&=\cosh{\frac{\ell_X(\delta_i)}{2}}\cosh{\frac{\ell_X(\delta_j)}{2}}.
    \end{align*}
    Then equation \eqref{eq:cosh eq} gives the length of $\gamma_\alpha$ as
    \[\ell_X(\gamma_\alpha)=2\cosh^{-1}(A_{i,j}\cosh{\ell_X(\alpha)}-B_{i,j}),\]
    and we want to show that this length is close to $2\ell_X(\alpha)$. To this end, we define the error function $E_{i,j}\colon [m_{i,j},\infty)\to\mathds{R}$ by
    \[E_{i,j}(\ell)=2\cosh^{-1}(A_{i,j}\cosh{\ell}-B_{i,j})-2\ell,\]
    where $m_{i,j}$ is a lower bound on the lengths of arcs between $\delta_i$ and $\delta_j$ which can be taken as \\ ${m_{i,j}\coloneqq\cosh^{-1}\Big(\frac{B_{i,j}+1}{A_{i,j}}\Big)}.$
    This function is continuous, and the limit
    \[\lim_{\ell\to\infty}E_{i,j}(\ell)=2\ln(A_{i,j})\]
    exists. Hence $|E_{i,j}(\ell)|$ is bounded for all $\ell\in[m_{i,j},\infty)$, thus there exists $C(i,j)>0$ such that for any arc $\alpha$ between $\delta_i$ and $\delta_j$, we have $|\ell_X(\gamma_\alpha)-2\ell_X(\alpha)|\leq C(i,j)$. Therefore as S has finitely many boundary components, there exists $C(X)>0$ such that for any $\alpha\in\mathcal{A}(S)$,    \[|\ell_X(\gamma_\alpha)-2\ell_X(\alpha)|\leq C(X).\]
\end{proof}

This association map $I$ is not one-to-one: for example, suppose $S$ is a four-holed sphere with boundary components $\delta_1,\delta_2,\delta_3,\delta_4$. Let $\alpha$ be a simple arc connecting $\delta_1$ and $\delta_2$, and $\beta$ a simple arc connecting $\delta_3$ and $\delta_4$. Then $\gamma_\alpha$ and $\gamma_\beta$ are the same (homotopy class of) curve, up to orientation.

There are also less trivial examples, such as where $\alpha$ and $\beta$ are arcs between the same boundary components, as the following example illustrates.

\begin{example}
    Let $P$ be the pair of pants from above. Fix a base point $\star\in P$ and choose generators $A$ and $B$ for $\pi_1(P,\star)\simeq F_2$ such that the loops $A$ and $B$ are freely homotopic (as oriented curves) to $\delta_0^P$ and $\delta_1^P$ respectively. Note that the third boundary component $\delta_2^P$ corresponds to the conjugacy class of $B^{-1}A^{-1}$. Let $S=S_{0,3,0}$ be another oriented pair of pants and let $a$, $b$ be generators of $\pi_1(S,\ast)$ (for some basepoint $\ast\in S$) such that $a$ and $b$ are freely homotopic to two of the boundary components of $S$, say $\delta_0^S$ and $\delta_1^S$ respectively.
    
    Now consider the homomorphism $h_1\colon\pi_1(P,\star)\to\pi_1(S,\ast)$ defined by
    \[A\mapsto a \qquad \textup{and} \qquad B\mapsto b^{-1}aba^{-1}b.\]
    As $h_1(A)$ and $h_1(B)$ are conjugate to $a$ and $b$, $h_1(A)$ and $h_1(B)$ are (freely) homotopic to $\delta_0^S$ and $\delta_1^S$, preserving orientation. Thus $h_1$ induces an immersion $\iota_1\colon P\to S$.
    
    Similarly, the homomorphism $h_2\colon\pi_1(P,\star)\to\pi_1(S,\ast)$ defined by 
    \[A\mapsto ab^{-1}aba^{-1} \qquad \textup{and} \qquad B\mapsto b\]
    also induces an immersion $\iota_2\colon P\to S$.
    
    Note that (the homotopy classes of) all three boundary components of $\iota_1(P)$ and $\iota_2(P)$ agree; they correspond to the conjugacy classes of $a$, $b$ and $b^{-1}ab^{-1}a^{-1}ba^{-1}$. Further note that
    \[\langle a,b^{-1}aba^{-1}b\rangle\neq\langle ab^{-1}aba^{-1}, b\rangle,\]
    and hence $\iota_1(P)$ is not homotopic to $\iota_2(P)$. In particular, the arcs $\iota_1(\alpha_P)$ and $\iota_2(\alpha_P)$ are not homotopic whilst $\gamma_{\iota_1(\alpha_P)}$ and $\gamma_{\iota_2(\alpha_P)}$ determine the same curve (namely, the conjugacy class of $b^{-1}ab^{-1}a^{-1}ba^{-1}$).
    \label{ex: lars example}
\end{example}

\medskip

Despite this complication, if we restrict $I$ to a (pure) mapping class group orbit of an arc, it is uniformly bounded-to-one. This will be enough for our purposes, as $I$ is equivariant with respect to $\PMod(S)$, which we now demonstrate.

\begin{lemma}
    Let $\varphi\in\PMod(S)$ and $\alpha\in\mathcal{A}(S)$. Then
    \[I(\varphi\cdot\alpha)=\varphi\cdot I(\alpha).\]
    \label{lem:I equivariant}
\end{lemma}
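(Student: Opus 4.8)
The plan is to route everything through the characterization $\gamma_\alpha=\iota_\alpha(\delta_2^P)$ of \eqref{gamma alpha is iota delta} together with the uniqueness (up to homotopy) of the pair-of-pants immersion established just after it, rather than manipulating the concatenation $\alpha^{-1}\cdot\delta_1^\alpha\cdot\alpha\cdot\delta_0^\alpha$ directly. Fix a homeomorphism $f$ representing $\varphi$. By definition $\varphi\cdot I(\alpha)$ is the geodesic representative of $f(\gamma_\alpha)$ and $I(\varphi\cdot\alpha)=\gamma_{\varphi\cdot\alpha}$ is already geodesic, so it suffices to show that $f(\gamma_\alpha)$ is freely homotopic to $\gamma_{\varphi\cdot\alpha}$.

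First I would record the two inputs coming from $\varphi$ being pure and orientation-preserving. Since $f\in\textup{Homeo}^+(S^o)$ preserves orientation and $\varphi\in\PMod(S)$ fixes each boundary component, $f$ sends $\delta_i$ to a curve freely homotopic to $\delta_i$ \emph{as an oriented curve} for each $i$ (orientation-preservation is what pins down the induced boundary orientation). In particular the endpoints of the orthogeodesic $\varphi\cdot\alpha$ lie on the same boundary geodesics as those of $\alpha$, so $\delta_0^{\varphi\cdot\alpha}=\delta_0^\alpha$ and $\delta_1^{\varphi\cdot\alpha}=\delta_1^\alpha$. Moreover, $f(\alpha)$ is homotopic rel $\de S$ to $\varphi\cdot\alpha$ by the definition of the action on arcs.

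The main step is to consider the composition $f\circ\iota_\alpha\colon P\to S$. This is again an orientation-preserving immersion, and by the previous paragraph it satisfies $f\circ\iota_\alpha(\delta_0^P)\simeq\delta_0^{\varphi\cdot\alpha}$, $f\circ\iota_\alpha(\delta_1^P)\simeq\delta_1^{\varphi\cdot\alpha}$ and $f\circ\iota_\alpha(\alpha_P)=f(\alpha)\simeq\varphi\cdot\alpha$. These are exactly the conditions defining the immersion $\iota_{\varphi\cdot\alpha}$ associated to $\varphi\cdot\alpha$. Since the free homotopy class of the third cuff is determined (up to homotopy) by the classes of the two cuffs and the seam, the uniqueness statement following \eqref{gamma alpha is iota delta} applies after homotoping $f\circ\iota_\alpha$ to agree with these curves on $\delta_0^P$, $\delta_1^P$ and $\alpha_P$, giving $f\circ\iota_\alpha\simeq\iota_{\varphi\cdot\alpha}$. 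Hence their images of the third cuff agree up to homotopy:
\[f(\gamma_\alpha)=(f\circ\iota_\alpha)(\delta_2^P)\simeq\iota_{\varphi\cdot\alpha}(\delta_2^P)=\gamma_{\varphi\cdot\alpha}.\]
Passing to geodesic representatives yields $\varphi\cdot I(\alpha)=\gamma_{\varphi\cdot\alpha}=I(\varphi\cdot\alpha)$.

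I expect the main obstacle to be purely bookkeeping with orientations and basepoints: because $\gamma_\alpha$ is the free homotopy class of the based loop $\alpha^{-1}\cdot\delta_1^\alpha\cdot\alpha\cdot\delta_0^\alpha$, a direct argument would need to track how $f$ acts on basepoints and on the induced boundary orientations, and then verify that $f$ carries this loop into the free homotopy class of $(\varphi\cdot\alpha)^{-1}\cdot\delta_1^{\varphi\cdot\alpha}\cdot(\varphi\cdot\alpha)\cdot\delta_0^{\varphi\cdot\alpha}$. Factoring through the immersion $\iota_\alpha$ and invoking the already-established uniqueness lemma sidesteps the explicit concatenation, reducing the whole statement to the observation that $f$ fixes each (oriented) boundary component up to homotopy and sends $\alpha$ to $\varphi\cdot\alpha$.
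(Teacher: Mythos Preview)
Your argument is correct and follows essentially the same route as the paper: both compose the immersion $\iota_\alpha$ with (a representative of) $\varphi$, observe that purity forces the two cuff images and the seam to match those required of $\iota_{\varphi\cdot\alpha}$, and then invoke the uniqueness of the pair-of-pants immersion to conclude $\gamma_{\varphi\cdot\alpha}=\varphi\cdot\gamma_\alpha$. Your version is slightly more explicit about orientation-preservation and about working with a chosen representative $f$, but there is no substantive difference.
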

\begin{proof}
    To see this, write
    \[\gamma_{\varphi\cdot\alpha}=\iota_{\varphi\cdot\alpha}(\delta_2^P)\]
    using \eqref{gamma alpha is iota delta}. We have that $\iota_{\varphi\cdot\alpha}(P)\subset S$ is an immersed pair of pants with boundary components $\delta_0^{\alpha}$ and $\delta_1^{\alpha}$, and the seam between them is $\varphi\cdot\alpha$. Similarly, $\varphi\cdot\iota_{\alpha}(P)\subset S$ is an immersed pair of pants with boundary components $\delta_0^{\alpha}$ and $\delta_1^{\alpha}$ and seam between them $\varphi\cdot\alpha$, since $\varphi$ fixes the boundary components of $S$. Therefore, $\iota_{\varphi\cdot\alpha}(P)=\varphi\cdot\iota_{\alpha}(P)$, and in particular, $\iota_{\varphi\cdot\alpha}(\delta_2^P)=\varphi\cdot\iota_{\alpha}(\delta_2^P)$. Since $\gamma_{\alpha}=\iota_{\alpha}(\delta^P_2)$ by \eqref{gamma alpha is iota delta}, we have
    \begin{equation*}
    \gamma_{\varphi\cdot\alpha}=\varphi\cdot\gamma_{\alpha}
    \end{equation*} 
    and so the lemma holds.
\end{proof}

Let $\alpha_0$ be a compact arc, and let \[I_{\alpha_0}\colon\PMod(S)\cdot\alpha_0\to\PMod(S)\cdot\gamma_{\alpha_0}\]
be the restriction of $I$ to arcs of type $\alpha_0$. By Lemma \ref{lem:I equivariant}, this map is well-defined. Moreover, as mentioned above, it is uniformly bounded-to-1, which we prove in the following proposition.
\begin{prop}
    Let $\alpha_0$ be a compact arc. Then there exits $k=k(\alpha_0)$ such that $I_{\alpha_0}$ is surjective and $k$-to-1.
    \label{prop:k to 1}
\end{prop}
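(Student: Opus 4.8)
The plan is to leverage two facts already in hand: the $\PMod(S)$-equivariance of the association map (Lemma \ref{lem:I equivariant}) and the uniform length control it enjoys (Lemma \ref{lem:bound compact}). Equivariance is a formal tool that will immediately give surjectivity and, crucially, will force all fibres of $I_{\alpha_0}$ to have the same cardinality; the length bound will then be used to show that a single fibre is finite. Putting these together produces a finite $k=k(\alpha_0)$ that works over every point of the image.

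I would first dispatch surjectivity and the equality of fibre sizes, both of which follow formally from equivariance. Any curve of type $\gamma_{\alpha_0}$ has the form $\varphi\cdot\gamma_{\alpha_0}$ for some $\varphi\in\PMod(S)$, and Lemma \ref{lem:I equivariant} rewrites this as $\varphi\cdot I(\alpha_0)=I(\varphi\cdot\alpha_0)=I_{\alpha_0}(\varphi\cdot\alpha_0)$; since $\varphi\cdot\alpha_0$ is of type $\alpha_0$, the curve lies in the image, so $I_{\alpha_0}$ is surjective. For the fibres, given two curves $\gamma,\gamma'$ in the target orbit, write $\gamma'=\psi\cdot\gamma$ with $\psi\in\PMod(S)$; equivariance gives $I_{\alpha_0}^{-1}(\gamma')=\psi\cdot I_{\alpha_0}^{-1}(\gamma)$, and as $\psi$ acts as a bijection of the orbit $\PMod(S)\cdot\alpha_0$, the two fibres are in bijection. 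Hence every fibre has the same cardinality, which I define to be $k$; it is at least $1$ since $\alpha_0$ itself lies in the fibre over $\gamma_{\alpha_0}$. It therefore suffices to show this one fibre is finite.

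For finiteness, observe that every arc $\alpha$ of type $\alpha_0$ has both endpoints on the boundary components $\delta_0^{\alpha_0}$ and $\delta_1^{\alpha_0}$, because $\PMod(S)$ fixes each boundary component. If $I_{\alpha_0}(\alpha)=\gamma_{\alpha_0}$, then Lemma \ref{lem:bound compact} forces $2\ell_X(\alpha)\leq\ell_X(\gamma_{\alpha_0})+C(X)$, bounding $\ell_X(\alpha)$ by a constant depending only on $X$ and $\alpha_0$. Since the set of orthogeodesics between a fixed pair of boundary components whose length is below any given bound is finite (the orthospectrum being discrete with finite multiplicities; see \cite{Bas93} and \cite{PP}), the fibre over $\gamma_{\alpha_0}$ is finite. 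Setting $k$ equal to its cardinality then yields that $I_{\alpha_0}$ is surjective and $k$-to-$1$.

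The main obstacle is precisely this last finiteness step: I expect to cite the discreteness of the orthospectrum rather than reprove it, as everything else reduces to the formal behaviour of equivariant maps between transitive $\PMod(S)$-sets. A minor point worth confirming is that ``$k$-to-$1$'' should be read as \emph{exactly} $k$ preimages over each point of the image, which is exactly what the equality of fibre cardinalities delivers once surjectivity is established.
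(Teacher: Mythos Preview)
Your proposal is correct and follows essentially the same approach as the paper: equivariance (Lemma \ref{lem:I equivariant}) for surjectivity and for matching up fibres, and the length bound (Lemma \ref{lem:bound compact}) together with the discreteness of the orthospectrum for finiteness of a single fibre. The only cosmetic differences are that the paper establishes finiteness first and then proves $k=k'$ via the two inequalities $k\leq k'$ and $k'\leq k$, whereas you first observe that $\psi$ induces a bijection $I_{\alpha_0}^{-1}(\gamma)\to I_{\alpha_0}^{-1}(\gamma')$ and then check finiteness of one fibre; both routes amount to the same argument.
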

\begin{proof}
    Let $\gamma\in\PMod(S)\cdot\gamma_{\alpha_0}$, so $\gamma=\varphi\cdot\gamma_{\alpha_0}$ for some $\varphi\in\PMod(S)$. Let $\alpha=\varphi\cdot\alpha_0$. Then by Lemma \ref{lem:I equivariant},
    \[\gamma=\varphi\cdot\gamma_{\alpha_0}=\gamma_{\varphi\cdot\alpha_0}=I_{\alpha_0}(\varphi\cdot\alpha_0)=I_{\alpha_0}(\alpha),\]
    thus $I_{\alpha_0}$ is surjective. 
    
    Consider the collection of compact arcs $\alpha_i$ such that $\gamma_{\alpha_i}=\gamma$. This set is finite: by Lemma \ref{lem:bound compact}, the maximum length of such an arc is $\frac{1}{2}\ell_X(\gamma)+\frac{1}{2}C(X)$, and thus there are only finitely many. Suppose that there are exactly $k$ such arcs, and suppose further that for some other curve $\gamma'$ of type $\gamma_{\alpha_0}$, there are exactly $k'$ arcs $\alpha'_i$ such that $\gamma_{\alpha'_i}=\gamma'$. Since $\gamma$ and $\gamma'$ are of the same type, there exists some $\psi\in\PMod(S)$ such that $\gamma'=\psi\cdot\gamma$. Again by Lemma \ref{lem:I equivariant}, we have that for each $i\in\{1,\dots,k\}$,
    \[\gamma_{\psi\cdot\alpha_i}=\psi\cdot\gamma_{\alpha_i}=\psi\cdot\gamma=\gamma'.\]
    Thus we have constructed a set of $k$ arcs which are associated to $\gamma'$, and so $k\leq k'$. Analogously, we can construct a set of $k'$ arcs which are associated to $\gamma$, hence $k'\leq k$. Therefore $k=k'$, and so $k$ is uniform across all curves of type $\gamma_{\alpha_0}$.
\end{proof}

We will need to use these results for compact multi-arcs. Let $\mathcal{A}_{\text{multi}}(S)$ and $\mathcal{C}_{\text{multi}}(S)$ be the sets of weighted compact multi-arcs and weighted multicurves respectively. By abuse of notation, define the association map on multi-arcs $I\colon\mathcal{A}_{\text{multi}}(S)\to\mathcal{C}_{\text{multi}}(S)$ by
\[I(\alpha)=I\Big(\sum_{i=1}^m a^i\alpha^i\Big)=\sum_{i=1}^m a^i I(\alpha^i).\]
As multi-arcs have finitely many components, the following is a short corollary of Lemma \ref{lem:bound compact}.

\newpage

\begin{corollary}
     Let $X$ be a complete, finite-area, hyperbolic metric on $S$ such that $\de S$ is geodesic. There exists a constant $C(X)>0$ such that for any $\alpha\in\mathcal{A}_{\text{multi}}(S)$, 
    \[|\ell_X(I(\alpha))-2\ell_X(\alpha)|\leq C(X),\]
    where $I(\alpha)=\gamma_\alpha$ is the multicurve associated to $\alpha$.
    \label{cor:bound compact multi}
\end{corollary}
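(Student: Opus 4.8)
The plan is to derive the multi-arc estimate directly from the single-arc estimate of Lemma~\ref{lem:bound compact}, using that $I$ and both length functions are linear in the components. Writing $\alpha=\sum_{i=1}^m a^i\alpha^i$ with each $\alpha^i\in\mathcal{A}(S)$, the definition of $I$ on multi-arcs gives $I(\alpha)=\sum_{i=1}^m a^i\gamma_{\alpha^i}$, and the definitions of the lengths of a multicurve and of a multi-arc give $\ell_X(I(\alpha))=\sum_{i=1}^m a^i\ell_X(\gamma_{\alpha^i})$ and $\ell_X(\alpha)=\sum_{i=1}^m a^i\ell_X(\alpha^i)$. Subtracting, the quantity to be bounded splits as a single weighted sum of per-component errors,
\[
\ell_X(I(\alpha))-2\ell_X(\alpha)=\sum_{i=1}^m a^i\bigl(\ell_X(\gamma_{\alpha^i})-2\ell_X(\alpha^i)\bigr),
\]
so everything reduces to controlling each bracketed term.

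First I would apply Lemma~\ref{lem:bound compact} to each component $\alpha^i$, giving $|\ell_X(\gamma_{\alpha^i})-2\ell_X(\alpha^i)|\le C(X)$. Because a multi-arc is by definition a finite formal sum, the displayed identity then contains only finitely many terms, and the triangle inequality assembles these uniform per-component bounds into a bound on the full expression, which is the content of the corollary.

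The step I expect to matter most is the one already carried out inside Lemma~\ref{lem:bound compact}, namely that the constant there is a single $C(X)$ valid simultaneously for every arc in $\mathcal{A}(S)$, rather than one depending on the individual arc. This uniformity is what makes the corollary immediate: it holds because $C(X)$ is built from the suprema of the finitely many bounded error functions $E_{i,j}$, which depend only on the boundary lengths $\ell_X(\delta_1),\dots,\ell_X(\delta_n)$ and not on the particular arc joining $\delta_i$ to $\delta_j$. With that uniformity in hand, no further hyperbolic geometry is required: the multi-arc bound follows purely from the finiteness of the number of components and the linearity recorded in the first step, which is exactly why the author presents it as a short corollary of Lemma~\ref{lem:bound compact}.
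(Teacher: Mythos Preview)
Your approach is exactly the one the paper intends: the paper gives no proof beyond the sentence ``As multi-arcs have finitely many components, the following is a short corollary of Lemma~\ref{lem:bound compact},'' and your linearity-plus-triangle-inequality argument is precisely that corollary spelled out.

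There is, however, a genuine gap that you skate over with the phrase ``assembles these uniform per-component bounds into a bound on the full expression.'' Carrying the triangle inequality through your displayed identity actually gives
\[
|\ell_X(I(\alpha))-2\ell_X(\alpha)|\le \sum_{i=1}^m a^i\,C(X)=C(X)\sum_{i=1}^m a^i,
\]
and since the weights $a^i\in\mathds{R}_+$ and the number $m$ of components are arbitrary, this is \emph{not} a uniform constant depending only on $X$. Indeed the statement as written is false: if a single arc $\alpha^1$ has nonzero error $\epsilon$, then the multi-arc $N\alpha^1$ has error $N\epsilon$. The paper's statement shares this imprecision. What saves the application in the proof of Theorem~\ref{th:main compact} is that one only ever ranges over multi-arcs of a fixed type $\alpha_0$, so the weights $a^i$ and the number $m$ are constant along the orbit, and a bound of the form $C(X)\sum a^i$ (depending on $X$ and on the type $\alpha_0$) is all that is needed. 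You should make this dependence explicit rather than asserting the bound is $C(X)$.
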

Furthermore, as an immediate corollary to Lemma \ref{lem:I equivariant}, $I$ remains $\PMod(S)$-equivariant when defined on compact multi-arcs. Following the proof of Proposition \ref{prop:k to 1}, we can see that the restriction of the association map to multi-arcs of a particular type is surjective and $k$-to-1, for some $k$ depending only on the type. We record this here for reference.
\begin{corollary}
     Let $\alpha_0$ be a compact multi-arc and $\gamma_{\alpha_0}$ be as in \eqref{def:ga multi}. Let $I_{\alpha_0}\colon\PMod(S)\cdot\alpha_0\to\PMod(S)\cdot\gamma_{\alpha_0}$ be the restriction of $I$ to multi-arcs of type $\alpha_0$. Then there exists $k=k(\alpha_0)$ such that $I_{\alpha_0}$ is surjective and $k$-to-1.
     \label{corol:k-to-1 multi}
\end{corollary}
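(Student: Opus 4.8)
The plan is to imitate the proof of Proposition \ref{prop:k to 1} essentially verbatim, now feeding in the multi-arc versions of its two ingredients: the length-distortion bound of Corollary \ref{cor:bound compact multi} and the $\PMod(S)$-equivariance of $I$ on multi-arcs (the immediate extension of Lemma \ref{lem:I equivariant} noted above). Surjectivity is identical to the single-arc case: given $\gamma\in\PMod(S)\cdot\gamma_{\alpha_0}$, write $\gamma=\varphi\cdot\gamma_{\alpha_0}$ for some $\varphi\in\PMod(S)$ and set $\alpha=\varphi\cdot\alpha_0$, which is a multi-arc of type $\alpha_0$. Then equivariance gives $I_{\alpha_0}(\alpha)=\varphi\cdot\gamma_{\alpha_0}=\gamma$, so $I_{\alpha_0}$ is onto.

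Next I would fix a multicurve $\gamma$ in the target orbit and show the fiber $I_{\alpha_0}^{-1}(\gamma)$ is finite. The one point that requires care beyond the single-arc argument is that the weights of a multi-arc of type $\alpha_0$ are pinned down: as recorded in Section \ref{sec:background}, any $\alpha=\sum_{i=1}^m a^i\alpha^i$ of type $\alpha_0=\sum_{i=1}^m a^i_0\alpha^i_0$ has, up to relabelling, $a^i=a^i_0$ for every $i$, with each $\alpha^i$ of type $\alpha^i_0$. Now any $\alpha$ in the fiber satisfies $|\ell_X(\gamma)-2\ell_X(\alpha)|\leq C(X)$ by Corollary \ref{cor:bound compact multi}, so $\ell_X(\alpha)=\sum_{i=1}^m a^i\ell_X(\alpha^i)\leq\frac{1}{2}\big(\ell_X(\gamma)+C(X)\big)$. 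Since the weights $a^i>0$ are fixed and each component length is positive, each $\ell_X(\alpha^i)\leq\frac{1}{2a^i}\big(\ell_X(\gamma)+C(X)\big)$ is bounded; hence every component is drawn from a finite set of arcs, and the fiber (a subset of the finitely many labelled tuples of such components) is finite.

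Finally, for uniformity of the fiber size across the orbit, I would argue as in Proposition \ref{prop:k to 1} but phrased as an explicit bijection. Given $\gamma'=\psi\cdot\gamma$ with $\psi\in\PMod(S)$, the map $\alpha\mapsto\psi\cdot\alpha$ sends multi-arcs of type $\alpha_0$ to multi-arcs of type $\alpha_0$, and by equivariance $I_{\alpha_0}(\psi\cdot\alpha)=\psi\cdot I_{\alpha_0}(\alpha)=\psi\cdot\gamma=\gamma'$, so it carries $I_{\alpha_0}^{-1}(\gamma)$ into $I_{\alpha_0}^{-1}(\gamma')$; applying $\psi^{-1}$ yields the inverse, so this is a bijection and the fiber cardinality is a constant $k=k(\alpha_0)$. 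The only genuinely new obstacle compared with the single-arc proposition is the finiteness step, where I must confirm that a bound on the weighted total length forces a bound on each individual component—which is exactly where the fact that the weights are determined by the type (and so are fixed and uniformly positive) is used. Everything else transfers directly from the single-arc argument.
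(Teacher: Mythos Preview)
Your proposal is correct and follows exactly the approach the paper intends: the paper does not give a separate proof but simply says to follow the proof of Proposition~\ref{prop:k to 1}, and you do precisely that, substituting Corollary~\ref{cor:bound compact multi} and the multi-arc equivariance for their single-arc counterparts. The one genuinely new wrinkle you flag---that a bound on the weighted total length must be parlayed into a bound on each component's length, which works because the weights are fixed by the type---is the only step not already present verbatim in the single-arc argument, and you handle it correctly.
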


\begin{remark}
    It would be interesting to understand the value of $k$. For example, if $\alpha_0$ is simple, is $k(\alpha_0)=1$? Is it always 1? Future work could study this association more closely to give us an idea of how it varies with the type of arc, and if it is not identically 1, derive some explicit examples of arcs of the same type which are associated to the same curve.
\end{remark}

\subsection{Counting compact arcs}

\label{sec: compact arcs}

We can now prove Theorem \ref{th:main compact}.
\begin{proof}[Proof of Theorem $\ref{th:main compact}$]
    Let $\alpha_0$ be a compact multi-arc. Consider the set
    \[\{\alpha \ \text{of type} \ \alpha_0\mid\ell_X(\alpha)\leq L\}\]
    for some $L>0$. By Corollary \ref{corol:k-to-1 multi}, there exists $k$ such that the association map $I_{\alpha_0}$ is $k$-to-1, and by Corollary \ref{cor:bound compact multi}, the maximum length of a multicurve associated to a multi-arc in this set is $2L+C(X)$. Hence we can write
    \[|\{\alpha \ \text{of type} \ \alpha_0\mid\ell_X(\alpha)\leq L\}|\leq k|\{\gamma \ \text{of type} \ \gamma_{\alpha_0}\mid\ell_X(\gamma)\leq2L+C(X)\}|.\]
    Then we have
    \begingroup
    \addtolength{\jot}{1em}
    \begin{align*}
        \limsup_{L\to\infty}&\frac{|\{\alpha \ \text{of type} \ \alpha_0\mid\ell_X(\alpha)\leq L\}|}{L^{6g-6+2(n+p)}}\leq\limsup_{L\to\infty}\frac{k|\{\gamma \ \text{of type} \ \gamma_{\alpha_0}\mid\ell_X(\gamma)\leq2L+C(X)\}|}{L^{6g-6+2(n+p)}} \\
        &=k\cdot\limsup_{L\to\infty}\frac{|\{\gamma \ \text{of type} \ \gamma_{\alpha_0}\mid\ell_X(\gamma)\leq2L+C(X)\}|}{(2L+C(X))^{6g-6+2(n+p)}}\frac{(2L+C(X))^{6g-6+2(n+p)}}{L^{6g-6+2(n+p)}} \\
        &=k\cdot2^{6g-6+2(n+p)}\frac{\cc(\gamma_{\alpha_0})}{\bb_{g,n+p}}\mthu(\{\ell_X(\cdot)\leq1\})
    \end{align*}
    \endgroup
    using Corollary \ref{corol:count with X}. Using a similar argument, we have
    \[|\{\alpha \ \text{of type} \ \alpha_0\mid\ell_X(\alpha)\leq L\}|\geq k|\{\gamma \ \text{of type} \ \gamma_{\alpha_0}\mid\ell_X(\gamma)\leq2L-C(X)\}|\]
    and therefore
     \[\liminf_{L\to\infty}\frac{|\{\alpha \ \text{of type} \ \alpha_0\mid\ell_X(\alpha)\leq L\}|}{L^{6g-6+2(n+p)}}\geq k\cdot2^{6g-6+2(n+p)}\frac{\cc(\gamma_{\alpha_0})}{\bb_{g,n+p}}\mthu(\{\ell_X(\cdot)\leq1\}).\]
    Hence, since the limit superior and inferior both exist and agree, we have that the limit exists and equals the same value. In other words,
    \[\lim_{L\to\infty}\frac{|\{\alpha \ \text{of type} \ \alpha_0\mid\ell_X(\alpha)\leq L\}|}{L^{6g-6+2(n+p)}}   =\frac{\cc(\alpha_0)}{\bb_{g,n+p}}\mthu(\{\ell_X(\cdot)\leq1\}),\]
    where $\cc(\alpha_0)\coloneqq k\cdot2^{6g-6+2(n+p)}\cc(\gamma_{\alpha_0})$, $k$ is as in Corollary \ref{corol:k-to-1 multi}, and $\cc(\gamma_{\alpha_0})$, $\bb_{g,n+p}$ and $\mthu$ are as in \eqref{eq:th mirz}.
\end{proof}

\section{Counting infinite arcs}

\label{sec: infinite arcs}

The main work in this section is to prove the lemmas from Section \ref{sec:arcs and curves} for infinite arcs, with modifications to account for the range of values the $t$-length of an infinite arc can take. The proof of Theorem \ref{th:main infinite} will then be analogous to that of Theorem \ref{th:main compact}.

First, we discuss the assignment of appropriate finite lengths to infinite arcs. For any $t\in(0,1]$, let $\mathscr{H}_t=\cup_{p\in\mathfrak{C}}H_t^p$ be the union of the cuspidal regions of volume $t$ as before, and define the $t$\emph{-length} $\ell^t_X(\alpha)$ of any arc $\alpha$ to be the length of $\alpha^t=\alpha\cap(S\setminus\mathscr{H}_t)$. That is,
\[\ell^t_X(\alpha)=\ell_X(\alpha^t).\]
Note that in general, $\alpha^t$ could consist of multiple connected components and in this case, $\ell_X(\alpha^t)$ is the sum of the lengths of its components. Fix an infinite arc $\alpha$ and let $t_\alpha$ be given by Lemma \ref{lem:arcs give td}. Then $\alpha\cap(S\setminus\mathscr{H}_{t_\alpha})$ is connected, and moreover for any $t\leq t_\alpha$, $\alpha^t$ has exactly one component.

As mentioned in the introduction, the $t$-length of an arc is closely related to the \emph{truncated length} as defined by Parlier in \cite{Parlier20}. Choose a standard collection of cuspidal regions, which we may take as $\mathscr{H}_1$. For any infinite arc $\alpha$, the (doubly) truncated length of $\alpha$ is the length of the segment of $\alpha$ between the first and last times $\alpha$ crosses $\de\mathscr{H}_1$. We denote this length by $\ell_X^{Tr}(\alpha)$.

This length is also closely related to the \emph{$\lambda$-length} introduced by Penner in \cite{Penner87} and \cite{Pennernotes}. In our setting, and choosing the appropriate cuspidal regions, we have that
\[\lambda(\alpha)=\ee^{\frac{1}{2}\ell_X^{Tr}(\alpha)}.\]

Note that for any $\alpha\in\mathcal{A}(S)$, $\ell_X^{t_\alpha}(\alpha)$ and the truncated length $\ell_X^{Tr}(\alpha)$ differ by a constant, and this constant depends only on $\iota(\alpha,\alpha)$. This is because the $t_\alpha$-length of $\alpha$ is exactly the truncated length plus the lengths of the two geodesic segments of $\alpha$ between $\de\mathscr{H}_1$ and $\de\mathscr{H}_{t_\alpha}$, which each have length $\ln(\frac{1}{t_\alpha})$. Thus we can write
\begin{equation}
    |\ell_X^{t_\alpha}(\alpha)-\ell_X^{Tr}(\alpha)|\leq2\ln\Big(\frac{1}{t_\alpha}\Big).
    \label{eq: truncated length bound}
\end{equation}
Recall that by Lemma \ref{lem:arcs give td}, $t_\alpha$ depends only on $\iota(\alpha,\alpha)$. Using this fact, one can show that Theorem \ref{th:main infinite} also holds when we replace $\ell_X^t$ by $\ell_X^{Tr}$.

The curve $\gamma_\alpha$ associated to an infinite arc $\alpha$ is defined analogously to the compact case. Denote by $p^\alpha_0$ and $p^\alpha_1$ the cusps at each end of $\alpha$, where $\alpha$ is oriented from $p^\alpha_0$ to $p^\alpha_1$. With $t_\alpha$ as above, define $\gamma_\alpha$ to be the geodesic curve (freely) homotopic to the loop given by the concatenation 
\[(\alpha^{t_\alpha})^{-1}\cdot h^\alpha_1\cdot\alpha^{t_\alpha}\cdot h^\alpha_0,\]
where $h^\alpha_0=\de H_{t_\alpha}^{p_0^\alpha}, h^\alpha_1=\de H_{t_\alpha}^{p_1^\alpha}$ are the horocycles at $p^\alpha_0$ and $p^\alpha_1$ of length $t_\alpha$, viewed as loops with appropriate basepoints and orientations. Note that if we replaced $t_\alpha$ with any $t<t_\alpha$, we would get the same curve $\gamma_\alpha$. Let $P$ be a (generalised) pair of pants with one boundary component and two cusps, labelled $\delta$, $p_0$ and $p_1$ respectively. There is an orientation-preserving immersion $\iota_\alpha\colon P\to S$ which sends $p_0$ and $p_1$ to $p^\alpha_0$ and $p^\alpha_1$ respectively, and such that (the homotopy class of) the simple infinite arc between them is mapped to $\alpha$. Then equivalently, $\gamma_\alpha$ is the geodesic representative of $\iota_\alpha(\delta)$.

We define $J\colon\mathcal{A}_\infty(S)\to\mathcal{C}(S)$ to be the association map from infinite arcs to curves, where $\mathcal{A}_\infty(S)$ is the set of all infinite arcs on $S$. That is, for any $\alpha\in\mathcal{A}_\infty(S)$,
\[J(\alpha)=\gamma_\alpha.\]

We will now prove an analogue of Lemma \ref{lem:bound compact} for infinite arcs. As $t$ can be taken arbitrarily close to 0, the $t$-length of an arc can be arbitrarily long, and so any bound on the difference between the $t$-lengths of infinite arcs and the lengths of their associated curves must depend on $t$. Furthermore, arcs which self-intersect arbitrarily often will go arbitrarily deep into the cusps, and therefore so will their curves. Thus for a fixed value of $t$, this difference can become arbitrarily large. Hence, any such bound must also depend on self-intersection number.

\begin{lemma}
    Let $\alpha$ be an infinite arc. Then for any positive $t<1$, there exists $C\big(\iota(\alpha,\alpha),t\big)>0$ such that
    \[|\ell_X(J(\alpha))-2\ell_X({\alpha}^t)|\leq C\big(\iota(\alpha,\alpha),t\big)\]
    where $J(\alpha)=\gamma_{\alpha}$ is the curve associated to $\alpha$.
    \label{lem:bound infinite}
\end{lemma}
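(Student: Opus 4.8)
The plan is to follow the proof of Lemma \ref{lem:bound compact} as closely as possible, replacing the compact pair-of-pants relation \eqref{eq:cosh eq} by the corresponding computation for the generalised pair of pants $P$ with two cusps, and then converting between the truncated length and the $t$-length using \eqref{eq: truncated length bound} together with the excursion estimates from Section \ref{sec:background}. The key organising idea is to first establish a \emph{clean} identity relating $\ell_X(\gamma_\alpha)$ to the truncated length $\ell_X^{Tr}(\alpha)$, with a bounded error independent of self-intersections, and only afterwards let $\iota(\alpha,\alpha)$ and $t$ enter, through the comparison of $\ell_X^{Tr}(\alpha)$ with $\ell_X^t(\alpha)$.

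\emph{Step A (hyperbolic geometry of the two-cusped pair of pants).} I would work in the cover of $(S,X)$ corresponding to $\pi_1(\iota_\alpha(P))=\langle P_0,P_1\rangle$, where $P_0,P_1$ are the primitive parabolics about the two cusps $p_0^\alpha,p_1^\alpha$; the covering is a local isometry and carries the seam to $\alpha$. Normalising in $\mathbb{H}^2$ so that $P_0\colon z\mapsto z+1$ fixes $\infty$ and the lift of the seam is the imaginary axis, with $P_1$ fixing $0$, one gets $P_1=\left(\begin{smallmatrix}1&0\\c&1\end{smallmatrix}\right)$ for a unique $c>0$, and $\gamma_\alpha$ is represented by $P_1P_0$, whose trace is $c+2$. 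Since the translation length of $P_1P_0$ is $\ell_X(\gamma_\alpha)$, this yields $c=2\cosh(\tfrac12\ell_X(\gamma_\alpha))-2=4\sinh^2(\tfrac14\ell_X(\gamma_\alpha))$. Computing where the imaginary axis meets the length-$1$ horocycles of $P_0$ and $P_1$ (at heights $1$ and $1/c$ along the axis) then shows that the truncated segment has length exactly $\ln c$. The point I would stress is that this is insensitive to self-intersections: the truncated length is the length of a \emph{single} geodesic segment (between the first and last meeting with $\de\mathscr{H}_1$), hence equals the length $\ln c$ of the corresponding segment of the axis no matter how its image winds through other cuspidal regions in between. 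Thus
\[\ell_X^{Tr}(\alpha)=\ln\!\big(4\sinh^2(\tfrac14\ell_X(\gamma_\alpha))\big).\]
Exactly as in Lemma \ref{lem:bound compact}, the error function $\ell\mapsto\ell-2\ln(4\sinh^2(\ell/4))$ is continuous with limit $0$ as $\ell\to\infty$; since every closed geodesic on $X$ has length at least the systole $\mathrm{sys}(X)>0$, it is bounded on $[\mathrm{sys}(X),\infty)$, giving $|\ell_X(\gamma_\alpha)-2\ell_X^{Tr}(\alpha)|\le C_1(X)$.

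\emph{Step B (from truncated length to $t$-length).} By \eqref{eq: truncated length bound} we have $|\ell_X^{t_\alpha}(\alpha)-\ell_X^{Tr}(\alpha)|\le 2\ln(1/t_\alpha)$, where $t_\alpha$ depends only on $\iota(\alpha,\alpha)$ by Lemma \ref{lem:arcs give td}, so it remains to compare $\ell_X^t(\alpha)$ with $\ell_X^{t_\alpha}(\alpha)$; these differ by the length of $\alpha$ inside the symmetric difference of $S\setminus\mathscr{H}_t$ and $S\setminus\mathscr{H}_{t_\alpha}$. If $t\le t_\alpha$, then by Lemma \ref{lem:arcs give td} the arc meets $\mathscr{H}_{t_\alpha}$ (hence $\mathscr{H}_t$) only in its two end excursions, so the difference is exactly $2\ln(t_\alpha/t)$. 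If $t_\alpha<t<1$, the difference consists of two end segments of length at most $\ln(1/t_\alpha)$ together with finitely many middle returning excursions into $\mathscr{H}_t\setminus\mathscr{H}_{t_\alpha}$; each such excursion self-intersects at least once by Lemma \ref{lem:beta self-intersects at least once}, so their number is at most $\iota(\alpha,\alpha)$, and each has length at most $B(\iota(\alpha,\alpha))$ by Lemma \ref{basmajian}. In either case $|\ell_X^t(\alpha)-\ell_X^{t_\alpha}(\alpha)|$ is bounded by a constant depending only on $\iota(\alpha,\alpha)$ and $t$.

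Combining Steps A and B by the triangle inequality gives $|\ell_X(\gamma_\alpha)-2\ell_X^t(\alpha)|\le C(\iota(\alpha,\alpha),t)$, as required. I expect the main obstacle to be Step A: choosing the normalisation so that the trace identity and the horocycle-height computation combine into the exact formula for $\ell_X^{Tr}(\alpha)$, and in particular recognising that the truncated length is unaffected by self-intersections, which is what lets all the $\iota(\alpha,\alpha)$-dependence be quarantined in Step B. The boundedness of the error then needs only the observation that $\ell_X(\gamma_\alpha)\ge\mathrm{sys}(X)$, since the error function diverges as $\ell\to0^+$.
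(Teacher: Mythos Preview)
Your argument is correct in outline and leads to the lemma; the main differences from the paper are organisational, plus one small computational slip.

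\textbf{Comparison.} The paper works directly with $\ell_X(\alpha^t)$ for $t\le t_\alpha$: it cuts the pulled-back pair of pants $P$ into four Lambert-type quadrilaterals, normalises one in the upper half-plane, and reads off the identity $\cosh\!\big(\tfrac14\ell_X(\gamma_\alpha)\big)=\tfrac{t}{2}\,e^{\frac12\ell_X(\alpha^t)}$, giving a bound $C_1(t)$. It then treats the case $t>t_\alpha$ by comparing $\ell_X^t$ with $\ell_X^{t_\alpha}$ via the same excursion estimates (Lemmas~\ref{basmajian}, \ref{lem:arcs give td}, \ref{lem:beta self-intersects at least once}) that you use in Step~B. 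Your route instead passes through the truncated length: you extract an exact relation between $\ell_X^{Tr}(\alpha)$ and $\ell_X(\gamma_\alpha)$ by a trace computation in the cover, get a bound $C_1(X)$ not depending on $t$ or on self-intersection, and only then convert $\ell_X^{Tr}$ to $\ell_X^t$. The two hyperbolic computations (quadrilateral versus trace) are equivalent, and your Step~B is essentially the paper's second case together with the trivial $t\le t_\alpha$ comparison. Your organisation has the pleasant feature of quarantining all dependence on $\iota(\alpha,\alpha)$ and $t$ to Step~B; the paper's has the advantage of giving the $t$-length identity directly without invoking the systole.

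\textbf{One caveat.} In Step~A you assert $c>0$ and that $\gamma_\alpha$ is represented by $P_1P_0$ with trace $c+2$, whence $c=4\sinh^2\!\big(\tfrac14\ell_X(\gamma_\alpha)\big)$. With the orientations on $h_0,h_1$ induced from $S$ (which is how the paper defines $\gamma_\alpha$), one actually finds $\lvert\operatorname{tr}\rvert=\lvert c\rvert-2$ and hence $\lvert c\rvert=4\cosh^2\!\big(\tfrac14\ell_X(\gamma_\alpha)\big)$; this is exactly the paper's formula specialised to $t=1$. Your conclusion is unaffected, since your error function is bounded on $[\mathrm{sys}(X),\infty)$ either way, but the $\sinh$ should be a $\cosh$ (and then no systole bound is even needed, since $\ell-2\ln\!\big(4\cosh^2(\ell/4)\big)$ is bounded on all of $(0,\infty)$).
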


\begin{figure}[h]
    \centering
    \includegraphics[scale=.34, angle=0]{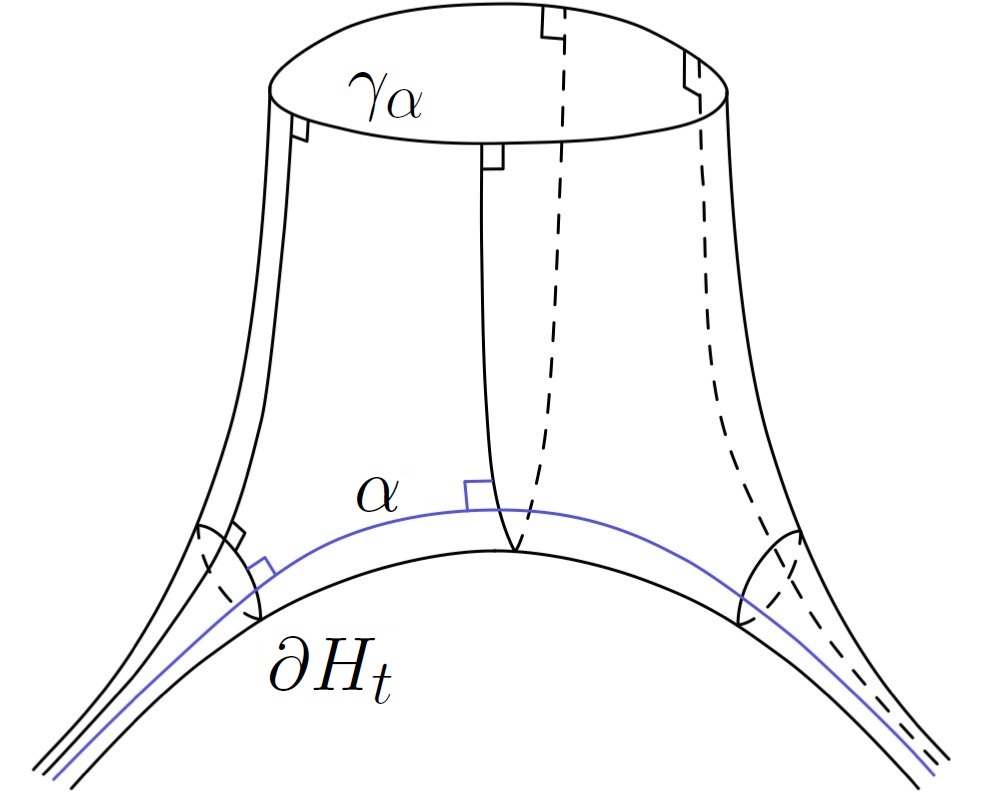}
    \caption{The pre-image of an infinite arc $\alpha$ in the generalised pair of pants $P$, with the perpendiculars which we cut along.}
    \label{fig:infinite arcs pair of pants}
\end{figure}

\begin{proof}
    Let $\alpha$ be an infinite arc, and let $t_\alpha$ be given by Lemma \ref{lem:arcs give td}. We will start by proving the lemma in the case that $t\leq t_\alpha$. Then we will demonstrate that for $t>t_{\alpha}$, the difference between $\ell_X^t(\alpha)$ and $\ell_X^{t_\alpha}(\alpha)$ is uniformly bounded across all arcs with the same self-intersection number, and so complete the proof.
    
    Suppose that $t\leq t_\alpha$. Equip the generalised pair of pants $P$ with a metric using the pullback of $X$ through $\iota_\alpha$. Cut $P$ along four geodesic arcs: the pre-image of $\alpha$, the perpendicular compact simple geodesic arc from the boundary component to itself, and the two simple half-infinite geodesic arcs between the boundary component and the cusps. See Figure \ref{fig:infinite arcs pair of pants}. We are left with 4 isometric copies of a quadrilateral with three right angles and one ideal vertex, which we label as in Figure \ref{fig:quadrilaterals}. 
    
    Since $t\leq t_\alpha$, we have that the length of the edge $qw$ is $\frac{1}{2}\ell_X(\alpha^t)$, and the length of the edge $uv$ is $\frac{1}{4}\ell_X(\gamma_\alpha)$. Consider this quadrilateral in the upper-half space model for $\mathds{H}^2$ and normalise it such that the ideal vertex is at $\infty$ and the edges incident to it are on the lines $x=0$ and $x=1$. As the length of the boundary of the cuspidal region of area $t$ is $t$, the length of the segment which lives in this quadrilateral is $\frac{t}{2}$. Therefore it lies on the line $y=\frac{2}{t}$.
    
    \begin{figure}[h]
        \centering
        \includegraphics[scale=.32, angle=0]{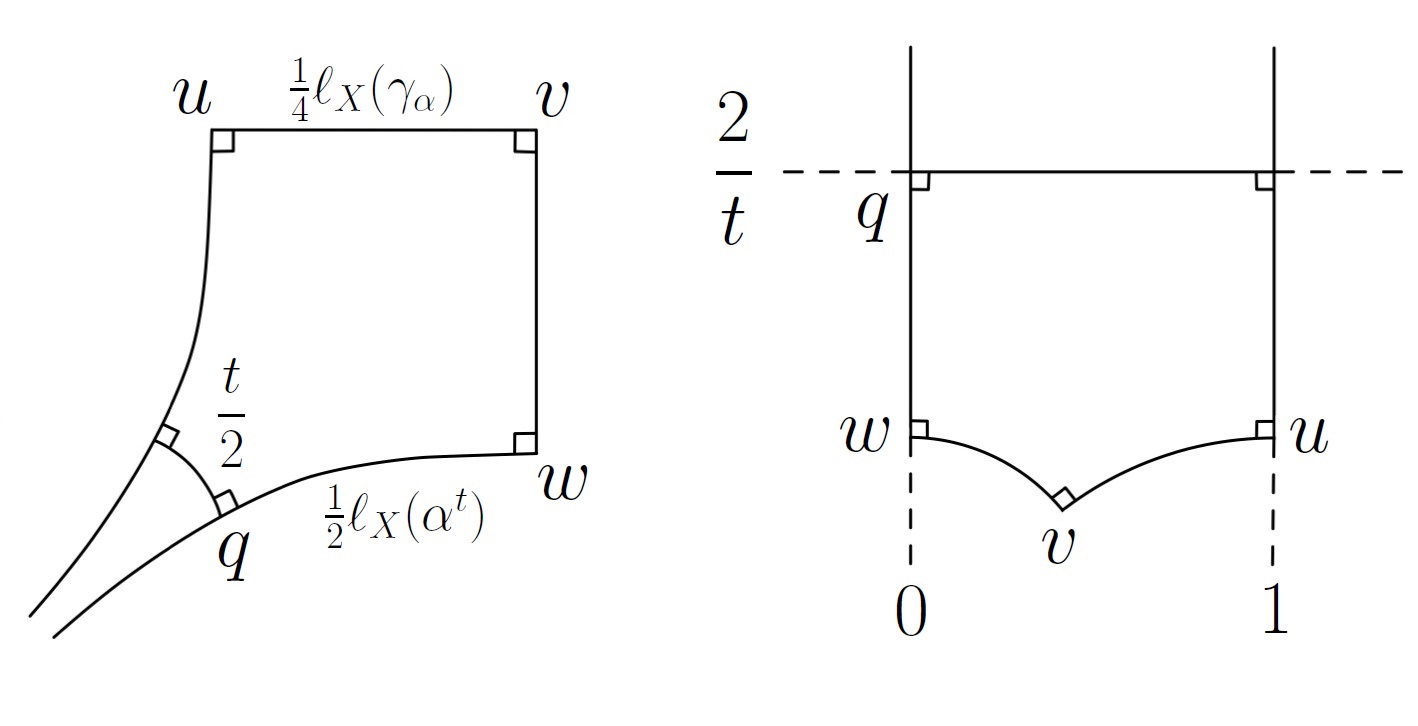}
        \caption{One of the quadrilaterals acquired from cutting $P$ (left), and the same quadrilateral in the upper-half plane model after normalising (right).}
        \label{fig:quadrilaterals}
    \end{figure}
    
    \newpage
    
    This means that $w=\frac{2}{t}i\ee^{-\frac{1}{2}\ell_X(\alpha^t)}$, and a computation shows that $\ell_X(\gamma_\alpha)=4\cosh^{-1}(\frac{t}{2}\ee^{\frac{1}{2}\ell_X(\alpha^t)})$. Hence, the difference $\ell_X(\gamma_\alpha)-2\ell_X(\alpha^t)$ can be written as \[4\cosh^{-1}\Big(\frac{t}{2}\ee^{\frac{1}{2}\ell_X(\alpha^t)}\Big)-2\ell_X(\alpha^t).\]
    The function $E_t(\ell)=4\cosh^{-1}(\frac{t}{2}\ee^{\frac{1}{2}\ell})-2\ell$ is continuous on $[m_t,\infty)$, where $m_t\coloneqq2\ln(\frac{2}{t})$ is a lower bound on the length of $\alpha^t$, and $\lim_{\ell\to\infty}E_t(\ell)=4\ln(t)$. It follows that there exists $C_1(t)>0$ such that
    \begin{equation}
    |\ell_X(\gamma_{\alpha})-2\ell_X(\alpha^t)|\leq C_1(t).
    \label{eq:arc length like curve length}
    \end{equation}
    
    Now suppose that $t>t_\alpha$. Note that $\alpha^t$ is contained in $\alpha^{t_\alpha}$, and so $\ell^t_X(\alpha)<\ell^{t_\alpha}_X(\alpha)$.
    Consider $\alpha^{t_\alpha}\setminus\alpha^t$, which lies in $\mathscr{H}_t\setminus\mathscr{H}_{t_\alpha}$. Exactly two of the components of $\alpha^{t_\alpha}\setminus\alpha^t$ are simple geodesic arcs from $\de\mathscr{H}_t$ to $\de\mathscr{H}_{t_\alpha}$ which meet each boundary orthogonally. Hence, these two components each have length $\ln(\frac{t}{t_\alpha})\leq\ln(\frac{1}{t_\alpha})$. The other components, if any, are returning segments in $\mathscr{H}_t$ with both endpoints on $\de\mathscr{H}_t$. Let $\beta$ be some such segment of $\alpha$, and let $d=\iota(\alpha,\alpha)$. Then we must have $\iota(\beta,\beta)\leq d$. Thus, as $t\leq1$, $\ell_X(\beta)\leq B(d)$ where $B$ is given by Lemma \ref{basmajian}. As $B$ only depends on $d$, this holds for any such segment. Now we need to show that there are only finitely many segments of $\alpha$ in $\mathscr{H}_t\setminus\mathscr{H}_{t_\alpha}$. From Lemma \ref{lem:beta self-intersects at least once}, we have that the self-intersection number of each segment is at least 1, and indeed the sum of the self-intersection numbers of these segments is at most $d$. Hence, $\alpha\cap(\mathscr{H}_t\setminus\mathscr{H}_{t_\alpha})$ has at most $d+2$ components and so
    \[\ell^{t_\alpha}_X(\alpha)-\ell^t_X(\alpha)\leq d B(d)+2\ln\Big(\frac{t}{t_\alpha}\Big)\leq d B(d)+2\ln\Big(\frac{1}{t_\alpha}\Big).\]
    Note that by Lemma \ref{lem:arcs give td}, $t_\alpha$ depends only on $d=\iota(\alpha,\alpha)$. Thus there exists some $C_2\big(\iota(\alpha,\alpha)\big)>0$ such that
    \begin{equation*}
    |\ell_X^{t_\alpha}(\alpha)-\ell_X^t(\alpha)|\leq C_2\big(\iota(\alpha,\alpha)\big).
    \end{equation*}
    
    Now by applying \eqref{eq:arc length like curve length} to $t_\alpha$, we have that
    $|\ell_X(\gamma_{\alpha})-2\ell_X(\alpha^{t_\alpha})|\leq C_1(t_\alpha),$
    and thus we can write
    \[|\ell_X(\gamma_\alpha)-2\ell_X(\alpha^t)|\leq C_1(t_\alpha)+2C_2\big(\iota(\alpha,\alpha)\big).\]
    
    Therefore, for any $t\leq1$,
    \[|\ell_X(\gamma_\alpha)-2\ell_X(\alpha^t)|\leq C\big(\iota(\alpha,\alpha),t\big)\]
    where
    \[C\big(\iota(\alpha,\alpha),t\big)=
    \begin{cases}
        C_1(t) & \text{if} \ t\leq t_\alpha, \\
        C_1(t_\alpha)+2C_2\big(\iota(\alpha,\alpha)\big) & \text{if} \ t>t_\alpha.
    \end{cases}
    \]
\end{proof}
\vspace{-\baselineskip}

\medskip
The fact that $J$ is $\PMod(S)$-equivariant holds by an argument analogous to the proof of Lemma \ref{lem:I equivariant}. That is, for any infinite arc $\alpha$ and any $\varphi\in\PMod(S)$,
\begin{equation}
    J(\varphi\cdot\alpha)=\varphi\cdot J(\alpha).
    \label{eq:J equivariant}
\end{equation}

For any infinite arc $\alpha_0$, $J_{\alpha_0}\colon\PMod(S)\cdot\alpha_0\to\PMod(S)\cdot\gamma_{\alpha_0}$ is the restriction of $J$ to $\PMod(S)\cdot\alpha_0$. Given any curve $\gamma$ of type $\gamma_{\alpha_0}$ and a fixed value of $t$, there are only finitely many arcs $\alpha$ of type $\alpha_0$ such that $\ell(\alpha^t)\leq\frac{1}{2}\ell_X(\gamma)+\frac{1}{2}C\big(\iota(\alpha_0,\alpha_0),t\big)$. By Lemma \ref{lem:bound infinite}, this means that there are only finitely many arcs $\alpha$ of type $\alpha_0$ such that $\gamma_\alpha=\gamma$. Using this together with \eqref{eq:J equivariant}, Proposition \ref{prop:k to 1} holds for $J_{\alpha_0}$ by an analogous argument.

\begin{prop}
    Let $\alpha_0$ be an infinite arc. Then there exists some $k=k(\alpha_0)$ such that $J_{\alpha_0}$ is surjective and $k$-to-1.
    \label{prop:k to 1 infinite}
\end{prop}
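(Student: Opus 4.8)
The plan is to transcribe the proof of Proposition \ref{prop:k to 1} essentially verbatim, with two substitutions: the association map $I$ is replaced by $J$, and the uniform distortion bound of Lemma \ref{lem:bound compact} is replaced by the self-intersection-dependent bound of Lemma \ref{lem:bound infinite}. The equivariance relation \eqref{eq:J equivariant} plays the role that Lemma \ref{lem:I equivariant} played before, and the only step that requires genuine care is ensuring that finiteness of fibers survives the passage to the $t$-dependent bound. Note that $J_{\alpha_0}$ itself carries no dependence on $t$, so throughout we are free to fix an auxiliary value $t \in (0,1)$ purely as a tool for bounding and counting.

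For surjectivity, I would take any $\gamma \in \PMod(S) \cdot \gamma_{\alpha_0}$, write $\gamma = \varphi \cdot \gamma_{\alpha_0}$ for some $\varphi \in \PMod(S)$, and set $\alpha = \varphi \cdot \alpha_0$. Then \eqref{eq:J equivariant} gives $J_{\alpha_0}(\alpha) = \gamma_{\varphi \cdot \alpha_0} = \varphi \cdot \gamma_{\alpha_0} = \gamma$, so $\gamma$ lies in the image. This is identical to the compact case.

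For the $k$-to-1 claim, the first step is to show each fiber is finite. Fix a curve $\gamma$ of type $\gamma_{\alpha_0}$. Since every mapping class preserves geometric intersection number, every arc $\alpha$ of type $\alpha_0$ satisfies $\iota(\alpha,\alpha) = \iota(\alpha_0,\alpha_0)$, so the constant $C\big(\iota(\alpha,\alpha),t\big) = C\big(\iota(\alpha_0,\alpha_0),t\big)$ from Lemma \ref{lem:bound infinite} is the \emph{same} for every arc in the orbit. Hence if $J_{\alpha_0}(\alpha) = \gamma$, then $\ell_X(\alpha^t) \leq \tfrac{1}{2}\ell_X(\gamma) + \tfrac{1}{2}C\big(\iota(\alpha_0,\alpha_0),t\big)$, a threshold independent of $\alpha$. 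As there are only finitely many arcs of type $\alpha_0$ whose $t$-length lies below a fixed bound, the fiber $J_{\alpha_0}^{-1}(\gamma)$ is finite; call its cardinality $k$ (and note $k \geq 1$ by surjectivity). Uniformity of $k$ across the orbit then follows exactly as before: given another curve $\gamma' = \psi \cdot \gamma$ of the same type with $\psi \in \PMod(S)$, pushing each arc of $J_{\alpha_0}^{-1}(\gamma)$ forward by $\psi$ produces, via \eqref{eq:J equivariant}, arcs of type $\alpha_0$ mapping to $\gamma'$, and these remain distinct since $\psi$ acts bijectively. This yields $|J_{\alpha_0}^{-1}(\gamma)| \leq |J_{\alpha_0}^{-1}(\gamma')|$, and the symmetric inequality gives equality.

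The main obstacle, and the only place the infinite case differs substantively from the compact one, is precisely the finiteness step. In Lemma \ref{lem:bound compact} the distortion constant was a single global $C(X)$, so a bound on $\ell_X(\gamma)$ immediately bounded the arc length; here the constant varies with both $t$ and self-intersection number, and a priori could fail to be uniform over the fiber. The resolution is the observation that restricting to a single $\PMod(S)$-orbit freezes $\iota(\alpha,\alpha)$, and hence freezes the constant, so the fiber length bound is genuinely uniform after all. Once this is in place, the finiteness and the uniformity argument carry over to $J_{\alpha_0}$ with no further work.
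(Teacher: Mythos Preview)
Your proposal is correct and matches the paper's own argument essentially line for line: the paper likewise invokes \eqref{eq:J equivariant} for surjectivity, observes that restricting to the orbit of $\alpha_0$ fixes $\iota(\alpha,\alpha)=\iota(\alpha_0,\alpha_0)$ so that Lemma \ref{lem:bound infinite} yields a uniform $t$-length bound and hence finite fibers, and then defers to the proof of Proposition \ref{prop:k to 1} for uniformity of the fiber size. Your identification of the self-intersection observation as the one new ingredient is exactly what the paper emphasises.
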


Armed with this, we can follow the argument from the proof of Theorem \ref{th:main compact} to prove Theorem \ref{th:main infinite}.

\begin{proof}[Proof of Theorem $\ref{th:main infinite}$]
    Let $\alpha_0$ be an infinite arc, and fix some positive $t\leq1$. Let $\gamma_{\alpha_0}$ be the curve associated to $\alpha_0$, as defined above. Using the same argument as in the proof of Theorem \ref{th:main compact}, replacing Corollary \ref{cor:bound compact multi} and Corollary \ref{corol:k-to-1 multi} with Lemma \ref{lem:bound infinite} and Proposition \ref{prop:k to 1 infinite}, we have
    \begin{align*}
        \lim_{L\to\infty}\frac{|\{\alpha \ \textup{of type} \ \alpha_0\mid\ell_X^t(\alpha)\leq L\}|}{L^{6g-6+2(n+p)}}&=k\cdot2^{6g-6+2(n+p)}\lim_{L\to\infty}\frac{|\{\gamma \ \textup{of type} \ \gamma_{\alpha_0}\mid\ell_X(\gamma)\leq L\}|}{L^{6g-6+2(n+p)}} \\
        &=k\cdot2^{6g-6+2(n+p)}\frac{\cc(\gamma_{\alpha_0})}{\bb_{g,n+p}}\mthu(\{\ell_X(\cdot)\leq1\}) \\
        &=\frac{\cc(\alpha_0)}{\bb_{g,n+p}}\mthu(\{\ell_X(\cdot)\leq1\})
    \end{align*}
    where $\cc(\alpha_0)=k\cdot2^{6g-6+2(n+p)}\cc(\gamma_{\alpha_0})$, $k$ is as in Proposition \ref{prop:k to 1 infinite}, and $\cc(\gamma_{\alpha_0})$ is as in \eqref{eq:th mirz}.
\end{proof}

\begin{remark}
    As previously mentioned, Theorem \ref{th:main infinite} holds when we replace the $t$-length by the truncated length $\ell_X^{Tr}$. This can be seen by applying Theorem \ref{th:main infinite} in the case that $t=t_{\alpha_0}$, and using the bound on the difference in the $t_{\alpha_0}$-length and the truncated length from \eqref{eq: truncated length bound}.
\end{remark}

\bibliography{myrefs}{}
\bibliographystyle{plain}

\end{document}